\DeclareMathAlphabet{\mathcalligra}{T1}{calligra}{m}{n}
\theoremstyle{plain}
\newtheorem{theorem}{Theorem}
\newtheorem{proposition}[theorem]{Proposition}
\newtheorem{lemma}[theorem]{Lemma}
\newtheorem*{theorem*}{Theorem}
\theoremstyle{definition}
\newtheorem*{remark}{Remark}
\newtheorem*{remarks}{Remarks}
\theoremstyle{remark}
\newcommand{\R}{\mathbb{R}}
\newcommand{\Z}{\mathbb{Z}}
\newcommand{\N}{\mathbb{N}}
\def\C{\mathbb{C}}
\renewcommand{\H}{\mathbb{H}}
\newcommand{\z}{\mathfrak{z}}
\newcommand{\HH}{\mathbb{H}}
\newcommand{\Log}{\operatorname{Log}}
\newcommand{\re}{\operatorname{Re}}
\newcommand{\im}{\operatorname{Im}}
\newcommand{\SL}{{\text {\rm SL}}}
\newcommand{\PSL}{{\text {\rm PSL}}}
\newcommand{\Arg}{\operatorname{Arg}}
\newcommand{\Li}{\operatorname{Li}}
\def\lp{\left(}
\def\rp{\right)}
\numberwithin{equation}{section}
\numberwithin{theorem}{section}
\title[Meromorphic (generalized) \texorpdfstring{$L$}{L}-functions ]{Generalized \texorpdfstring{$L$}{L}-functions for meromorphic modular forms and their relation to the Riemann zeta function}
\author{Kathrin Bringmann}
\address{University of Cologne, Department of Mathematics and Computer Science, Weyertal 86-90, 50931 Cologne, Germany}
\email{kbringma@math.uni-koeln.de}
\author{Ben Kane}
\address{Department of Mathematics, University of Hong Kong, Pokfulam, Hong Kong}
\email{bkane@hku.hk}
\begin{document}
	
\begin{abstract}
In this paper, we construct a family of generalized $L$-functions, one for each point $z$ in the upper half-plane. We prove that as $z$ approaches $i\infty$, these generalized $L$-functions converge to an $L$-function which can be written in terms of the Riemann zeta function.
\end{abstract}
\keywords{$L$-functions, meromorphic modular forms, Riemann zeta function}
\subjclass[2010]{11F12, 11M41, 11M06}
\date{\today}

\maketitle

\section{Introduction and statement of results}

We begin by recalling some basic properties of $L$-functions. They encode arithmetic information $c(p^r)$ that comes from local $p$-adic properties of some global object. This local information is encoded in a series 
\[
	L_p(s) := 1+\sum_{r=1}^\infty \frac{c\left(p^r\right)}{p^{rs}}.
\]
In many 
examples, one
 may write
\begin{equation}\label{eqn:localnice}
	L_p(s) = \frac{1}{1-f_p\left(p^{-s}\right)p^{-s}}
\end{equation}
for some polynomials $f_p$. Extending the definition of $c(p^r)$ multiplicatively, one that then forms the $L$-series as a Dirichlet series via the \begin{it}Euler product\end{it} 
\begin{equation}\label{eqn:EulerProduct}
L(s):=\prod_{p} L_p(s)=\sum_{n= 1}^\infty \frac{c(n)}{n^{s}},
\end{equation}
where the product runs over all primes and the series converges for $\re(s)$ sufficiently large if $c(n)$ grows at most polynomially in $n$. 

By adding archimedean information $L_{\infty}(s)$ at the real place one may form a complete $L$-function $\Lambda(s):=L_{\infty}(s)\prod_{p}L_p(s)$ which has a meromorphic continuation to  $\C$ and satisfies a functional equation
\begin{equation}\label{eqn:functional}
\Lambda(k-s)=\Lambda(s)
\end{equation}
for some $k\in\R$. The original example of an $L$-function is the \begin{it}Riemann zeta function\end{it}, defined for $s\in\C$ with $\re(s)>1$ as 
\begin{equation}\label{eqn:zetadef}
\zeta(s):=\sum_{n=1}^\infty \frac{1}{n^s}=\prod_{p} \frac{1}{1-p^{-s}}.
\end{equation}
 From \eqref{eqn:zetadef}, one sees both the Euler product and series representations characterized by \eqref{eqn:EulerProduct}, and the local factors indeed have the shape \eqref{eqn:localnice}. 
Riemann \cite{Riemann} proved that $\zeta$ has a meromorphic continuation to $\C$ with only a simple pole at $s=1$. He also showed that the function 
\[
\xi(s):=\frac{1}{2}\pi^{-\frac{s}{2}}s(s-1) \Gamma\left(\frac{s}{2}\right)\zeta(s),
\]
where $\Gamma(s)$ is the Gamma function, satisfies the functional equation 
\begin{equation}\label{func}
	\xi(s) = \xi(1-s),
\end{equation}
verifying the property \eqref{eqn:functional}.

 The functional equation \eqref{func} can be explained by viewing the zeta function as the Mellin transform of a modular form  (see \cite[p. 452]{ZagierMellin}). It is hence natural to ask whether the functional equation \eqref{eqn:functional} more generally arises from connections with modular objects. A number of results and important conjectures about the interplay between modular objects and $L$-functions have originated from this question.  For example, 
Hamburger \cite{Hamburger} proved a converse theorem characterizing $\zeta(s)$ by its functional equation which was later extended by Hecke \cite{HeckeConverse} and
 Weil \cite{WeilConverse} (see also \cite{BookerKrishnamurthy}). 
Specifically,
 the version of the converse theorem of Hecke states that if $c(n)$ are slow-growing and \eqref{eqn:functional} is satisfied, then the $L$-function comes from a Mellin transform of a modular form over $\SL_2(\Z)$. 
The
 numbers $c(n)$ are then realized as the Fourier coefficients of the corresponding modular 
form. 
 The idea to relate objects in different settings through their corresponding $L$-functions is one of the cornerstones of the Langlands program \cite{Langlands}, which conjectures deep connections between number theory and geometry.

In this paper, we construct a family of functions $L_z(s)$ for $s\in\C$ and $z\in\H:=\{z\in\C: \im(z)>0\}$ away from 
\[
\mathcal{S}:=\left\{z\in\H: \exists M\in\SL_2(\Z)\text{ such that } Mz\in i\R^+\right\}.
\]
We call these functions generalized $L$-functions because they resemble $L$-functions. By the converse theorem of Hecke \cite{HeckeConverse} and Weil \cite{WeilConverse}, if these are not Mellin transforms of modular forms, 
then either the Euler product and series representations in \eqref{eqn:EulerProduct} or the functional equation in \eqref{eqn:functional}
must not hold.   We consider the functional equation \eqref{eqn:functional} to be the fundamental property of the function in $s$  and then concentrate on the properties as a function of the other variable $z$. In particular, the (generalized) $L$-functions $L_z$ satisfy a functional equation, are harmonic and $\SL_2(\Z)$-invariant as a function of $z\in \H\setminus \mathcal{S}$, and are related to the Riemann zeta function in the limit $z\to x+i\infty$ with $x\in\R\setminus\Z$.

\begin{theorem}\label{thm:main}
Let $s\in \C \setminus \{1\}, z\in\H$. There exists  $L_z(s)$ satisfying the following properties:
\begin{enumerate}[leftmargin=*, label=\rm(\arabic*)]
\item For $\left(\begin{smallmatrix}a&b\\c&d\end{smallmatrix}\right)\in\SL_2(\Z)$ and $z\in \H\setminus\mathcal{S}$ 
 we have
\[
L_{\frac{az+b}{cz+d}}(s)=L_{z}(s).
\]
\item 
The function $z\mapsto L_z(s)$ is harmonic on $\H\setminus \mathcal{S}$.
\item 
We have the functional equation
\[
L_z(2-s)=-L_z(s).
\]
\item Suppose that $1< \re(s)<2$. Then for $x\in\R\setminus\Z$ we have 
\[
\hspace{.5cm}\lim_{y\to\infty}\left(L_{x+iy}(s) - \frac{2\pi i}{s} y^{s} - \frac{2\pi i }{s-2} y^{2-s}+2\Arg\left(1-e^{2\pi i x}\right)y^{s-1}\right)=-\frac{24i}{(2\pi)^{s-1}}\Gamma(s)  \zeta(s)\zeta(s-1),
\]
where $\Arg$ denotes the principal value of the argument.
\end{enumerate}
\end{theorem}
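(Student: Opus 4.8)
The plan is to realize $L_z(s)$ as a regularized Mellin transform, along the positive imaginary axis, of a weight-$2$ meromorphic modular form in an auxiliary variable $\tau$ whose poles lie exactly along the $\SL_2(\Z)$-orbit of $z$. Concretely, I would take $f_z(\tau):=\tfrac{1}{2\pi i}\tfrac{j'(\tau)}{j(\tau)-j(z)}$, which is weight $2$ in $\tau$, has simple poles precisely at $\SL_2(\Z)z$, and depends on $z$ only through the invariant $j(z)$; one then sets $L_z(s):=\int_0^\infty f_z(it)\,t^{s-1}\,dt$, continued by the Riemann--Hecke splitting $\int_0^1+\int_1^\infty$ (the nonzero constant term of $f_z$ at the cusp contributing explicit rational terms). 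The hypothesis $z\notin\mathcal{S}$ is exactly what forces the contour to avoid every pole of $f_z$, so the integral makes sense. Property (1) is then immediate: since the poles of $f_z$ form the full orbit, $f_{Mz}=f_z$ for $M\in\SL_2(\Z)$, hence $L_{Mz}(s)=L_z(s)$. Property (3) is the classical payoff of the splitting: substituting $t\mapsto 1/t$ in $\int_0^1$ and invoking $f_z(-1/\tau)=\tau^2 f_z(\tau)$ produces the automorphy factor $(it)^2=-t^2$, which turns $t^{s-1}$ into $-t^{(2-s)-1}$ and yields an antisymmetric representation of $L_z(s)$; reading off $s\mapsto 2-s$ gives $L_z(2-s)=-L_z(s)$, the sign being forced by the weight.

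For property (2) I would differentiate under the integral sign. For each fixed $\tau$ off the orbit, $z\mapsto f_z(\tau)=\tfrac{c(\tau)}{j(\tau)-j(z)}$ is holomorphic in $z$, hence annihilated by the weight-$0$ hyperbolic Laplacian $\Delta_z=-y^2(\partial_x^2+\partial_y^2)$; establishing locally uniform convergence of the continued integral on compacta of $\H\setminus\mathcal{S}$ then lets me commute $\Delta_z$ past the Mellin integral and conclude $\Delta_z L_z(s)=0$ there. The argument degenerates precisely at those $z$ for which a pole of $f_z$ meets the contour, i.e. $z\in\mathcal{S}$, which is why harmonicity is asserted only off $\mathcal{S}$, where $L_z$ acquires the logarithmic/monodromy singularities associated to poles crossing the path.

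The main obstacle is property (4), and in particular the identification of the finite part with the completed $L$-function of $E_2$: note that $-\tfrac{24i}{(2\pi)^{s-1}}\Gamma(s)\zeta(s)\zeta(s-1)=2\pi i\,\Lambda(E_2,s)$, since $\sum_{n\ge1}\sigma_1(n)n^{-s}=\zeta(s)\zeta(s-1)$ and $E_2=1-24\sum_{n\ge1}\sigma_1(n)q^n$, with non-holomorphic completion $E_2^*(\tau)=E_2(\tau)-\tfrac{3}{\pi\im(\tau)}$. The strategy is to expand $f_{x+iy}(it)$ through its $q$-series, whose coefficients are Laurent polynomials in $j(z)$, and to isolate three divergent contributions before passing to the limit. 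As $y\to\infty$ the orbit-poles of $f_z$ crowd the contour (near both cusps $i\infty$ and $0$, the latter reached via modularity as $t\to0$), and their regularized interaction with $t^{s-1}$ produces the dual terms $\tfrac{2\pi i}{s}y^s$ and $\tfrac{2\pi i}{s-2}y^{2-s}$, while the principal part nearest the contour contributes $2\Arg(1-e^{2\pi ix})y^{s-1}$; here the argument arises as $\im\log(1-e^{2\pi ix})$, exactly as in a second-Kronecker-limit-type evaluation, and its singularity at $x\in\Z$ matches the fact that $x\in\Z$ places $z$ in $\mathcal{S}$. The hard part is twofold: first, proving that after subtracting these three explicit terms the remaining integral converges uniformly in $y$, so that $\lim_{y\to\infty}$ may be interchanged with the meromorphically continued Mellin transform; and second, verifying that the surviving expansion collapses to the coefficients $-24\sigma_1(n)$, whose Mellin transform sums to $-24(2\pi)^{-s}\Gamma(s)\zeta(s)\zeta(s-1)$ up to the normalizing factor $2\pi i$. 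Tracking the exact constants---the $-24$, the two zeta factors, and the powers of $2\pi i$---through the $q$-expansion, the contour splitting, and the regularization is where essentially all of the technical effort will lie.
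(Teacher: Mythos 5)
Your integrand is the wrong object, and this is fatal for part (4). Up to sign, $\tfrac{1}{2\pi i}\tfrac{j'(\tau)}{j(\tau)-j(z)}=-H_z(\tau)$ is exactly the meromorphic form $H_z$ of \eqref{eqn:Hztaudef}, so your $L_z(s)$ is (up to normalization and a $z$-independent shift) the regularized Mellin transform of $H_z$ along $i\R^+$. The paper's $L_z(s)$ is instead built from the \emph{polar harmonic Maass form} $H_z^*=H_z-\widehat{E}_2$ (realized as the value at $w=1$ of the raised resolvent kernel, Proposition \ref{prop:dGs}), and it is precisely the subtracted Eisenstein piece that survives the limit: by Lemma \ref{lem:Hinftyprop} one has $H_z^*\to-\widehat{E}_2$ as $z\to i\infty$, whereas $H_z\to 0$ pointwise in $\tau$ (since $J(z)\to\infty$). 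Accordingly, the regularized Mellin transform of $H_z$, after the divergent powers of $y$ are removed, tends to $0$ --- this is exactly the content of \eqref{eqn:finaltoshow} --- and the nonzero finite part $2\pi i\,L(\widehat{E}_2,s)=-\tfrac{24i}{(2\pi)^{s-1}}\Gamma(s)\zeta(s)\zeta(s-1)$ enters only through the $\widehat{E}_2$-term in Lemma \ref{lem:Lz*LE2}. Your hoped-for step that ``the surviving expansion collapses to the coefficients $-24\sigma_1(n)$'' therefore fails: the Fourier coefficients of your integrand in $\tau$ are the Faber polynomials $j_n(z)$ (Asai--Kaneko--Ninomiya), which grow like $e^{2\pi ny}$ as $y\to\infty$ and, once the resulting $y^{s-\ell}$ and $y^{2-s-\ell}$ contributions are stripped off, leave nothing behind; the $\sigma_1(n)$ simply are not in $H_z$. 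The repair is to take $-2\pi i\left(H_z-\widehat{E}_2\right)$ as the integrand, i.e.\ to add the constant-in-$z$ term $2\pi i\,L(\widehat{E}_2,s)$ and rescale by $2\pi i$ (the rescaling is also needed to match the coefficients $\tfrac{2\pi i}{s}$ and $\tfrac{2\pi i}{s-2}$ of the divergent terms in (4)); this correction does not disturb your arguments for (1)--(3).

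For parts (1)--(3) your route is sound in outline and genuinely more elementary than the paper's: holomorphy of $z\mapsto (j(\tau)-j(z))^{-1}$ gives harmonicity once locally uniform convergence on compacta of $\H\setminus\mathcal{S}$ is established, and the $\int_0^1+\int_1^\infty$ splitting gives the $s\mapsto 2-s$ antisymmetry. The paper instead obtains (1)--(3) from the resolvent kernel $G_w(z,\tau)$: raising in $\tau$, continuing in $w$, and specializing to $w=1$ yields modular invariance, the eigenfunction property, and the functional equation for a whole family $L_z(w;s)$ at once, at the cost of the machinery of Section \ref{sec:HzN}. But without the $\widehat{E}_2$ correction above, your construction produces a function whose limit in (4) is $0$, so the theorem as stated is not proved.
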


\begin{remarks}
	\ \begin{enumerate}[leftmargin=*,label=\rm(\arabic*)]
		\item Although the generalized $L$-functions $L_z(s)$ do not seem to satisfy the properties of classical $L$-functions (we expect that one can define a series via the integral in \eqref{eqn:LNz*gendef} below, but we do not study that in this paper), note that the limit in Theorem \ref{thm:main} (4) is the $L$-function associated to the weight two Eisenstein series for $\SL_2(\Z)$. The series representation for $\zeta(s)\zeta(s-1)$ may be found in \eqref{zetap}) and its Euler product follows from the right-hand side of \eqref{eqn:zetadef}).
		
		\item A more general version of Theorem \ref{thm:main} (4) holds for all $s\in\C$ (see Theorem \ref{thm:LFun-RiemannZeta}). 
	\end{enumerate}
\end{remarks}

The paper is organized as follows.  In Section \ref{sec:prelim}, we recall polar harmonic Maass forms and relate them to the resolvent kernel, as well as introducing some well-known useful functions and their properties. In Section \ref{sec:generalizeddef}, we define the functions $L_z(s)$ and prove Theorem \ref{thm:main} (1)--(3). We finally show Theorem \ref{thm:main} (4) in Section \ref{sec:limit}.

\section*{Acknowledgements}

The first author is supported by the Deutsche Forschungsgemeinschaft (DFG) Grant No. BR 4082/5-1. The research of the second author was supported by grants from the Research Grants Council of the Hong Kong SAR, China (project numbers HKU 17301317 and 17303618).

\section{Preliminaries}\label{sec:prelim}

\subsection{Special functions and their properties}\label{specfuncprop}

 We require certain special functions and their properties. For $s\in\C$ and $y>0$, we let $\Gamma(s,y):=\int_{y}^{\infty} e^{-t}t^{s-1}dt$ be the {\it incomplete gamma function}. By \cite[(8.11.1)--(8.11.3)]{NIST}, as $y\to\infty$ we have
\begin{equation}\label{eqn:Gammaexp}
\Gamma(s,y) = y^{s-1}e^{-y}\left(\sum_{\ell=0}^{N-1}(s-\ell)_{\ell}y^{-\ell} + O\left(y^{-N}\right)\right),
\end{equation}
where $(a)_{\ell}:=\prod_{j=0}^{\ell-1} (a+j)$ is the \begin{it}rising factorial\end{it}. For $y_1,y_2\in\R$ with $y_1y_2>0$, we also require the \begin{it}generalized incomplete gamma function\end{it} $\Gamma\left(s,y_1,y_2\right):=\int_{y_1}^{y_2}e^{-t}t^{s-1}dt$. For $\re(s)>0$, we have the relation $\Gamma(s,y_1,y_2)=\gamma(s,y_2)-\gamma(s,y_1)$ where  $\gamma(s,y):=\int_0^{y} e^{-t}t^{s-1} dt$. We also require the identity 
(see \cite[8.5.1]{NIST})
\begin{equation}\label{eqn:Gamma1F1}
\Gamma\left(s,y_1,y_2\right)=\frac{y_2^s}{s}{_1F_1}\left(s;s+1;-y_2\right)-\frac{y_1^s}{s}{_1F_1}\left(s;s+1;-y_1\right).
\end{equation}
Here ${_1F_1}(a;b;y)$ denotes the confluent hypergeometric function and for $y_j<0$, $y^s_j$ is defined through the principal branch of the logarithm.
 The asymptotic behaviour of the generalized incomplete gamma function as $y_1\to\infty$ or $y_2\to\infty$ may thus be obtained from the asymptotic behaviour of the ${_1F_1}$-function. 
Namely, as $y\to \infty$ we have (see \cite[13.7.2]{NIST}, where we note that $M(a,b,y)={_1F_1}(a;b;y)$) 
\begin{equation*}\label{eqn:1F1growth}
{_1F_1}(a;b;y)=\Gamma(b)\left(\frac{e^{y}y^{a-b}}{\Gamma(a)} + \frac{e^{-\pi i a} y^{-a}}{\Gamma(b-a)}\right) \left(1+O_{a,b}\left(y^{-1}\right)\right).
\end{equation*}
Assuming that $s\notin\Z$, \cite[13.7.1]{NIST} furthermore implies that for any $N\in\N_0$ 
\begin{equation}\label{eqn:1F1ss+1}
{_1F_1}\left(s;s+1;y\right)\sim s e^{y}y^{-1}\left(\sum_{j=0}^{N}(1-s)_j  y^{-j}+O_{s,N}\left(y^{-N-1}\right)\right).
\end{equation}

Recall that for $\ell\in\R$ the \begin{it}polylogarithm function\end{it} is defined for $|Z|<1$ by (see \cite[25.12.10]{NIST}) 
\[
\Li_{\ell}(Z) := \sum_{n=1}^\infty \frac{Z^n}{n^{\ell}}. 
\]
We frequently use the identity
\begin{equation}\label{eqn:Lieval}
\lim_{\varepsilon\to 0^+} \sum_{n=1}^{\infty} \frac{e^{-2\pi n(ix+\varepsilon y)}}{\left( n(1+\varepsilon)\right)^{\ell}}=\lim_{\varepsilon\to 0^+}\frac{\Li_{\ell}\left(e^{-2\pi (i x+\varepsilon y)}\right)}{(1+\varepsilon)^{\ell}}=\Li_{\ell}\left(e^{-2\pi i x}\right),
\end{equation}
where in the last equality we employ the fact that, since $x\notin\Z$, we avoid the branch cut of $\Li_{\ell}(z)$ along the positive real axis from $1$ to $\infty$ and hence the limit exists.

\subsection{Modular forms and polar harmonic Maass forms}

As usual, for $k\in\Z$, $\gamma=\left(\begin{smallmatrix}a&b\\c&d\end{smallmatrix}\right)\in\SL_2(\Z)$, and $f:\H\to\C$ we define the \begin{it}weight $k$ slash operator\end{it} by
\[
f\big|_{k}\gamma(\tau):=j(\gamma,\tau)^{-k} f\left(\frac{a\tau+b}{c\tau+d}\right),
\]
where $j(\gamma,\tau):=c\tau+d$. 
To describe certain modular objects, we
 require the growth of several functions towards points in $\H\cup\{i\infty\}$. For a non-holomorphic modular form $f$ and a point $z\in \H$, we say that $f$  \begin{it}exhibits the growth $g_z$ at $\tau=z$\end{it} if $f(\tau)-g_z(\tau)$ is bounded in an open neighborhood around $z$. We say that a singularity of $f$ at a point $z\in\H\cup\{i\infty\}$ has \begin{it}finite order\end{it} if the following holds:
\begin{enumerate}[leftmargin=*]
\item 
If $z\in\H$, then there exists $n\in\N_0$ such that $(\tau-z)^{n}f(\tau)$  is bounded for $\tau$ in a sufficiently small neighborhood of $z$.
\item 
If $z=i\infty$, then there exists $n\in\N_0$ such that $f(\tau)e^{-2\pi nv}$  is bounded for $v$ sufficiently large.
\end{enumerate}

 For $\tau=u+iv$, the weight $k$ \begin{it}hyperbolic Laplace operator\end{it} is given by
\[
\Delta_{k}:=-v^2\left(\frac{\partial^2}{\partial u^2}+\frac{\partial^2}{\partial v^2}\right)+ikv\left(\frac{\partial}{\partial u}+i\frac{\partial}{\partial v}\right).
\]
Throughout if we need to specify in which variable an operator $\mathcal{O}_k$ is taken, then we write $\mathcal{O}_{k,\tau}$. 
We call a function $f:\H\to\C$ a {\it weight $k$ polar harmonic Maass form} if it satisfies the following properties:
\begin{enumerate}[leftmargin=*, label=\rm(\arabic*)]
\item For every $\gamma\in\SL_2(\Z)$, we have
$
f|_k\gamma=f.
$
\item The function $f$ is annihilated by $\Delta_k$ except for a discrete set of singularities. 
\item The singularities of $f$ all have finite order.
\end{enumerate}

We require the following relations, which may be found for example in \cite[Lemma 5.2]{Book}, between the hyperbolic Laplace operator and the \begin{it}Maass raising operator\end{it} $R_{k}:=2i\frac{\partial}{\partial \tau} +\frac{k}{v}$.
\begin{lemma}\label{lem:RaiseLowerProp}
\ \begin{enumerate}[leftmargin=*, label=\rm(\arabic*)]
\item We have
\begin{equation*}
	R_k\left(f\big|_k\gamma\right) = R_k(f)\big|_{k+2}\gamma.
\end{equation*}
 In particular, if $f$ satisfies weight $k$ modularity, then $R_{k}(f)$ satisfies weight $k+2$ modularity.

\item If $\Delta_{k}(f)=\lambda f$, then $\Delta_{k+2}(R_k(f))=(\lambda+k)R_k(f)$.
\end{enumerate}
\end{lemma}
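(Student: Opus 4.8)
The plan is to verify both claims by direct computation, organized around the Wirtinger derivatives $\partial_\tau=\frac12(\partial_u-i\partial_v)$ and $\partial_{\bar\tau}=\frac12(\partial_u+i\partial_v)$, since $\Delta_k$ and $R_k$ take a particularly clean form in these variables. The two elementary facts I would record at the outset are $\partial_\tau v=-\frac{i}{2}$ and $\partial_{\bar\tau}v=\frac{i}{2}$, both immediate from $v=\frac{\tau-\bar\tau}{2i}$.

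For part (1), I would write $f|_k\gamma(\tau)=j(\gamma,\tau)^{-k}f(\gamma\tau)$ with $\gamma\tau=\frac{a\tau+b}{c\tau+d}$ and apply $R_k=2i\partial_\tau+\frac{k}{v}$ term by term, using the chain rule together with the two transformation laws $\frac{d}{d\tau}(\gamma\tau)=j(\gamma,\tau)^{-2}$ (which follows from $ad-bc=1$) and $\im(\gamma\tau)=\frac{v}{|j(\gamma,\tau)|^2}$. Comparing the result with $(R_kf)|_{k+2}\gamma(\tau)=j(\gamma,\tau)^{-k-2}(R_kf)(\gamma\tau)$, the first-derivative terms match automatically, and the identity reduces to matching the coefficients of $f(\gamma\tau)$, which amounts to the single relation $c(\tau-\bar\tau)=2icv$; this holds since $\tau-\bar\tau=2iv$. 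The second assertion in (1) is then immediate: if $f|_k\gamma=f$ for all $\gamma$, then $R_k(f)=R_k(f|_k\gamma)=(R_kf)|_{k+2}\gamma$.

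For part (2), the cleanest route is to factor $\Delta$ through $R_k$ and the (weight-independent) lowering operator $L:=-2iv^2\partial_{\bar\tau}$. Expanding the compositions $LR_k$ and $R_kL$ directly, using $\partial_{\bar\tau}(v^{-1})=-\frac{i}{2v^2}$ and the two derivative rules above, I expect to obtain the operator identities
\[
\Delta_k=-LR_k-k,\qquad \Delta_{k+2}=-R_kL.
\]
Composing these then gives
\[
\Delta_{k+2}R_k=-R_kLR_k=R_k(-LR_k)=R_k(\Delta_k+k)=R_k\Delta_k+kR_k.
\]
Applying this operator identity to a function with $\Delta_k(f)=\lambda f$ yields $\Delta_{k+2}(R_kf)=R_k(\lambda f)+kR_kf=(\lambda+k)R_kf$, which is exactly the claim.

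The computations throughout are routine; the only delicate point, and the one I expect to be the main obstacle, is the sign and constant bookkeeping in the factorizations of part (2). In particular the inhomogeneous term $-k$ in $\Delta_k=-LR_k-k$ must be tracked exactly, as it is precisely this constant that produces the eigenvalue shift $\lambda\mapsto\lambda+k$. As an alternative to the factorization, one could instead verify the commutator identity $\Delta_{k+2}R_k-R_k\Delta_k=kR_k$ by a single longer computation in the Wirtinger derivatives, but the factored form is shorter and makes the origin of the shift transparent.
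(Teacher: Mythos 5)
Your proof is correct: I checked the coefficient matching in part (1) (it does reduce to $j(\gamma,\tau)-\overline{j(\gamma,\tau)}=c(\tau-\bar\tau)=2icv$) and the factorizations $\Delta_k=-LR_k-k$ and $\Delta_{k+2}=-R_kL$ in part (2), and all signs and constants work out. The paper itself does not prove this lemma but cites \cite[Lemma 5.2]{Book}, and your computation via Wirtinger derivatives and the raising/lowering factorization of the Laplacian is exactly the standard argument given there, so there is nothing to add.
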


We require properties of some explicit modular functions (i.e., weight zero meromorphic modular forms). To describe these, for even $k\geq 4$ we define the {\it weight $k$ Eisenstein series} by
\[
	E_k(\tau) := 1-\frac{2k}{B_k} \sum_{n=1}^\infty \sigma_{k-1}(n) e^{2\pi i n\tau},
\]
where $\sigma_\ell(n):=\sum_{d\mid n} d^\ell$ and $B_k$ is the $k$-th Bernoulli number. We denote the unique normalized newform of weight $12$ on $\SL_2(\Z)$ by
\[
	\Delta(\tau):=q\prod_{n=1}^\infty (1-q^n)^{24}.
\]
We then set $J(\tau):=j(\tau)-744$, where $j(\tau):=\frac{E_4(\tau)^3}{\Delta(\tau)}$. We furthermore define 
the weight two meromorphic modular form
\begin{equation}\label{eqn:Hztaudef}
H_z(\tau):=\frac{E_4(\tau)^2E_6(\tau)}{\Delta(\tau)(J(\tau)-J(z))}.
\end{equation}
 We require the behaviour of $H_z(\tau)$ as $\tau\to i\infty$. To obtain this, we note that by computing the first term of their Fourier expansions, one easily sees that as $v\to\infty$, we have
\begin{equation}\label{eqn:GrowthJ}
J(\tau)=e^{-2\pi i \tau}+O\left(e^{-2\pi v}\right),\quad  \frac{E_4(\tau)^2E_6(\tau)}{\Delta(\tau)} = e^{-2\pi i\tau} +O(1). 
\end{equation}
We see in particular
that as $\tau\to i\infty$ (resp. $\tau\to 0$), $H_z(\tau)$ exhibits the growth $1$ (resp. $\frac{1}{\tau^2}$), 
applying the
 weight two modularity 
in the second case.

\subsection{Real-analytic Eisenstein series}

Setting $\Gamma_{\infty}:=\{ \pm \left(\begin{smallmatrix}1&n\\0&1\end{smallmatrix}\right): n\in\Z\}$, 
the \begin{it}weight $k$ real-analytic Eisenstein series\end{it} is defined for $k\in 2\N_0$ and $w\in\C$ with $\re(w)>1$ by
\begin{equation*}
E_{k}(w;\tau):=\sum_{\gamma\in\Gamma_{\infty}\backslash\SL_2(\Z)} v^w\big|_{k}\gamma
\end{equation*}
and then continued meromorphically to the complex $w$-plane (see \cite[Theorem 4.4.2]{Kubota}).  Since we make frequent use of results from \cite{GrossZagierSingular,GrossZagier}, we note for comparison the definition \cite[(2.14) on p. 239]{GrossZagier} specialized to $N=1$ (see also the equivalent definition in \cite[Section 5]{GrossZagierSingular}), where the weight $k=0$ is omitted, the order of the variables is flipped, and a semicolon is used to separate the variables instead of a comma.

We furthermore define the weight two harmonic Eisenstein series as
\[
\widehat{E}_2(\tau):=E_2(\tau)-\frac{3}{\pi v}\qquad\text{ where }E_2(\tau):=1-24\sum_{n=1}^{\infty} \sigma_1(n) e^{2\pi i n\tau}.
\]
Using the so-called Hecke trick, Hecke showed that 
\begin{equation*}
\widehat{E}_{2}(\tau)=  E_{2}(0;\tau).
\end{equation*}

The weight zero and weight two real-analytic Eisenstein series are related via the raising operator, as a direct calculation shows.
\begin{lemma}\label{lem:E0deriv}
For $w\in\C$ 
which is not a pole of $(w-1)E_0(w;z)$ and
 $k\in 2\N_0$, we have 
\[
R_{k}\left( E_{k}(w;\tau)\right)=(w+k)E_{k+2}(w-1;\tau).
\]
In particular, 
\[
R_{0}\left(E_{0}(1;\tau)\right)=\widehat{E}_{2}(\tau).
\]
\end{lemma}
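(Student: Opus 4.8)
The plan is to prove the general raising identity first on the region of absolute convergence $\re(w)>1$, where the defining series can be differentiated term by term, and then to propagate it to the full stated domain by meromorphic continuation in $w$. The displayed special case is then extracted by a careful passage to $w=1$, where the only genuine obstacle lies.

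First I would compute the effect of $R_k$ on the seed function $v^w$. Writing $v=\frac{\tau-\bar\tau}{2i}$ so that $\frac{\partial}{\partial\tau}v^w=\frac{w}{2i}v^{w-1}$, a direct calculation gives
\[
R_k\left(v^w\right)=2i\cdot\frac{w}{2i}v^{w-1}+\frac{k}{v}v^w=(w+k)v^{w-1}.
\]
Next, for $\re(w)>1$ the defining series for $E_k(w;\tau)$ converges absolutely and locally uniformly, and the same holds for its $\tau$-derivatives, so I may apply $R_k$ term by term. By Lemma \ref{lem:RaiseLowerProp}(1) each summand transforms as $R_k(v^w|_k\gamma)=R_k(v^w)|_{k+2}\gamma=(w+k)v^{w-1}|_{k+2}\gamma$, and summing over $\gamma\in\Gamma_\infty\backslash\SL_2(\Z)$ yields
\[
R_k\left(E_k(w;\tau)\right)=(w+k)\sum_{\gamma\in\Gamma_\infty\backslash\SL_2(\Z)}v^{w-1}\big|_{k+2}\gamma=(w+k)E_{k+2}(w-1;\tau).
\]

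To reach the full range of $w$, I would observe that both sides are meromorphic in $w$ (using the meromorphic continuation of the real-analytic Eisenstein series, and that $\frac{\partial}{\partial\tau}$ commutes with this continuation since $E_k(w;\tau)$ is jointly real-analytic in $\tau$ and meromorphic in $w$). As they agree on the half-plane $\re(w)>1$, the identity theorem forces agreement at every $w$ for which both sides are regular, which includes the set in the hypothesis.

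Finally, for the displayed consequence I set $k=0$ and $w=1$, where the identity reads $R_0(E_0(w;\tau))=wE_2(w-1;\tau)$. The right-hand side is holomorphic at $w=1$ with value $E_2(0;\tau)=\widehat{E}_2(\tau)$ by Hecke's identity. The main obstacle, and the very reason for the hypothesis that $(w-1)E_0(w;z)$ be regular, is that $E_0(w;\tau)$ itself has a simple pole at $w=1$; however its residue is a constant $c$ independent of $\tau$, so expanding $E_0(w;\tau)=\frac{c}{w-1}+E_0(1;\tau)+O(w-1)$ and using $R_0(c)=2i\frac{\partial}{\partial\tau}c=0$ shows that $R_0$ annihilates the polar part. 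Hence the value at $w=1$ of $R_0(E_0(w;\tau))$ equals $R_0(E_0(1;\tau))$, and comparing with the right-hand side gives $R_0(E_0(1;\tau))=\widehat{E}_2(\tau)$, as claimed.
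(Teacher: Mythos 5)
Your proof is correct and is precisely the ``direct calculation'' the paper invokes without writing out: compute $R_k(v^w)=(w+k)v^{w-1}$, push $R_k$ through the sum via Lemma \ref{lem:RaiseLowerProp}\,(1) on the domain of absolute convergence, and extend by meromorphic continuation. Your extra care at $w=1$ --- noting that the residue of $E_0(w;\tau)$ is a $\tau$-independent constant killed by $R_0$, so that the polar part does not obstruct the evaluation $R_0(E_0(1;\tau))=E_2(0;\tau)=\widehat{E}_2(\tau)$ --- is exactly the point that makes the ``in particular'' statement legitimate, and is a welcome addition to what the paper leaves implicit.
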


Using the fact that $E_2(w;\tau)$ is an eigenfunction under $\Delta_2$, one obtains the following asymptotic behaviour of the Eisenstein series (see \cite[(2.17) on p. 240]{GrossZagier}).
\begin{lemma}\label{lem:E2growth}
Let $w\in\C$ be given such that $\Gamma(w)\zeta(2w-2)\neq 0$. Then, as $t\to \infty$,
\[
 E_2(w;it)= t^{w} - \frac{\sqrt{\pi}w\Gamma\left(w+\frac{1}{2}\right)\zeta(2w+1)}{\Gamma(w+2)\zeta(2w+2)} t^{-w-1} + O_w\left(e^{-t}\right).
\]
The dependence on $w$ in the error term is locally uniform. 
\end{lemma}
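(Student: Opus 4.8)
The plan is to reduce the weight-two estimate to the classical constant term of the weight-zero Eisenstein series by means of the raising operator, which is available through Lemma \ref{lem:E0deriv}. First I would recall the standard Fourier expansion of $E_0(w;\tau)$ in the cusp at $i\infty$: for $\re(w)>1$, and then by meromorphic continuation, one has
\[
E_0(w;\tau)=v^w+\phi(w)v^{1-w}+\sum_{n\neq 0}c_n(w,v)e^{2\pi i nu},\qquad \phi(w):=\sqrt{\pi}\,\frac{\Gamma\!\left(w-\tfrac12\right)\zeta(2w-1)}{\Gamma(w)\zeta(2w)},
\]
where the nonconstant coefficients $c_n(w,v)$ are built from $K$-Bessel functions and satisfy $c_n(w,v)=O(e^{-2\pi v})$ as $v\to\infty$, locally uniformly in $w$ away from poles.

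Next I would invoke Lemma \ref{lem:E0deriv} with $k=0$ and the spectral parameter shifted by one, rewriting it as $E_2(w;\tau)=\frac{1}{w+1}R_0\!\left(E_0(w+1;\tau)\right)$. Since $R_0=2i\partial_\tau=i\partial_u+\partial_v$ acts as $\partial_v$ on the $u$-independent part of the Fourier expansion, applying $R_0$ termwise to $E_0(w+1;\tau)$ sends $v^{w+1}\mapsto(w+1)v^w$ and $\phi(w+1)v^{-w}\mapsto -w\,\phi(w+1)v^{-w-1}$, while preserving the exponential decay of the nonconstant terms. Dividing by $w+1$, specializing to $\tau=it$ (so $u=0$, $v=t$), and simplifying the coefficient $-\frac{w}{w+1}\phi(w+1)$ with $\Gamma(w+2)=(w+1)\Gamma(w+1)$ produces exactly
\[
-\,\frac{\sqrt{\pi}\,w\,\Gamma\!\left(w+\tfrac12\right)\zeta(2w+1)}{\Gamma(w+2)\zeta(2w+2)},
\]
which is the stated second term; the nonconstant contribution becomes $O(e^{-2\pi t})=O(e^{-t})$, locally uniformly in $w$.

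The nonvanishing hypothesis $\Gamma(w)\zeta(2w-2)\neq 0$, matching the normalization of \cite{GrossZagier}, serves to keep $w$ away from the poles of $E_2(w;\tau)$, so that the displayed three-term asymptotic is genuinely valid; our derivation in addition only uses $w\neq -1$ and the finiteness of the coefficient. Alternatively, one can obtain the shape of the answer intrinsically from the hint in the text: since $E_2(w;\tau)$ is a $\Delta_2$-eigenfunction with eigenvalue $-w(w+1)$, its $u$-independent part must be a linear combination of the two indicial solutions $v^w$ and $v^{-w-1}$, with the $v^w$-coefficient normalized to $1$ by the identity coset; the raising-operator computation above then merely supplies the remaining coefficient.

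The main obstacle I anticipate is analytic bookkeeping rather than any conceptual difficulty. One must justify differentiating the Fourier expansion term by term under $R_0$ and, crucially, control the resulting error \emph{uniformly} in $w$ on compact sets avoiding the poles. This amounts to verifying that the Bessel-type coefficients $c_n(w,v)$ and their $v$-derivatives decay like $e^{-2\pi v}$ with implied constants locally bounded in $w$; once this uniformity is in hand, the passage to the asymptotic along $\tau=it$ is routine.
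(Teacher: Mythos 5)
Your proof is correct, and it is worth noting that the paper itself does not actually prove this lemma: it simply quotes the constant-term formula \cite[(2.17) on p.~240]{GrossZagier} for the weight-two series, with only a parenthetical remark that the shape of the answer is forced by the $\Delta_2$-eigenfunction property. Your route is therefore genuinely different in that it is self-contained modulo the standard Fourier expansion of $E_0(w;\tau)$: you transport the weight-zero constant term $v^{w}+\phi(w)v^{1-w}$ through the raising operator of Lemma \ref{lem:E0deriv}, and the bookkeeping checks out --- $R_0=i\partial_u+\partial_v$ acts as $\partial_v$ on the zero mode, $E_2(w;\tau)=\tfrac{1}{w+1}R_0\bigl(E_0(w+1;\tau)\bigr)$ is the correct specialization of the lemma, and
\[
-\frac{w}{w+1}\,\phi(w+1)=-\frac{\sqrt{\pi}\,w\,\Gamma\bigl(w+\tfrac12\bigr)\zeta(2w+1)}{\Gamma(w+2)\,\zeta(2w+2)}
\]
is exactly the stated coefficient, while the $K$-Bessel modes and their derivatives contribute $O(e^{-2\pi t})\subset O(e^{-t})$ locally uniformly in $w$. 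Your secondary observation that the eigenvalue $-w(w+1)$ forces the zero mode to be a combination of the indicial solutions $v^{w}$ and $v^{-w-1}$ is precisely the mechanism the paper alludes to, so the two derivations dovetail. Two minor points of care: (i) you should record that Lemma \ref{lem:E0deriv} is being applied at the shifted parameter, so one needs $w+1$ not to be a pole of $wE_0(w+1;\cdot)$ (this holds away from $w=0$ and the zeros of $\zeta(2w+2)$, which is where the displayed coefficient degenerates anyway); and (ii) the hypothesis $\Gamma(w)\zeta(2w-2)\neq 0$ is inherited from the normalization in \cite{GrossZagier} rather than being the exact nondegeneracy your derivation requires, so your gloss on it is a plausible reading but not a verification. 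Neither point affects the validity of the asymptotic you prove.
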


\subsection{Weight two polar harmonic Maass forms and the resolvent kernel}\label{sec:HzN}

In this subsection, we recall certain weight two polar harmonic Maass forms $H_{z}^*$ on $\SL_2(\Z)$. Explicitly, we define (writing $z=x+iy$ throughout)
\begin{equation*}
H_{z}^*(\tau):=-\frac{y}{2\pi} \Psi_{2}(\tau,z),
\end{equation*}
where $y \Psi_{2}(\tau,z)$ is the analytic continuation to $w=0$ of the Poincar\'e series (see \cite[Section 3.1]{BKweight0})
\begin{align*}
\mathcal{P}_{w}(\tau,z):=\sum_{M\in \SL_2(\Z)} \frac{\varphi_w(M\tau,z)}{j(M,\tau)^2|j(M,\tau)|^{2w}}.
\end{align*}
Here $\varphi_w(\tau,z):=y^{w+1}(\tau-z)^{-1}(\tau-\bar{z})^{-1}|\tau-\bar{z}|^{-2w}$. Setting  $X_{\tau}(z):=\frac{z-\tau}{z-\overline{\tau}}$, we use the following properties of $z\mapsto H_z^*$ which follow by \cite[Lemma 4.4 and Proposition 5.1]{BKweight0},
 and a direct calculation.
\begin{lemma}\label{lem:Hz*prop}
The function $z\mapsto H_{z}^*(\tau)$ is a weight zero polar harmonic Maass forms. Moreover, if $\tau\in\H$ is not an elliptic fixed point, then it exhibits the growth $-\frac{1}{4\pi v X_{\tau}(z)}$ at $z=\tau$ and no other 
singularities in $(\SL_2(\Z)\backslash {\H}) \cup \{i \infty\}$. In particular
\begin{equation}\label{eqn:HNz*Residue}
2\pi i \lim_{z\to\tau} (z-\tau) H_{z}^*(\tau) = 1.
\end{equation}
\end{lemma}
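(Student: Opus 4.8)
The plan is to read all three assertions off the explicit Poincar\'e-series description $H_z^*(\tau)=-\frac{1}{2\pi}\left[\mathcal{P}_w(\tau,z)\right]_{w=0}$, where $[\,\cdot\,]_{w=0}$ denotes the analytic continuation to $w=0$ (the factor $y$ in $-\frac{y}{2\pi}\Psi_2$ cancels the $y^{w+1}$ in the summand). The weight-zero modularity in $z$ I would check by hand: substituting $z\mapsto\gamma z$ for $\gamma\in\SL_2(\Z)$ and reindexing the sum by $M\mapsto\gamma^{-1}M$, the cocycle relations $\gamma\tau-\gamma z=\frac{\tau-z}{j(\gamma,\tau)j(\gamma,z)}$, $\im(\gamma z)=\im(z)|j(\gamma,z)|^{-2}$, and $j(\gamma,\bar z)=\overline{j(\gamma,z)}$ make every factor $j(\gamma,M'\tau)$ in the transformed summand cancel, while the surviving $z$-automorphy factors combine with $\im(\gamma z)^{w+1}=y^{w+1}|j(\gamma,z)|^{-2w-2}$ to cancel as well, returning exactly the $(\gamma^{-1}M)$-th summand of $\mathcal{P}_w(\tau,z)$. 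Hence $\mathcal{P}_w(\tau,\gamma z)=\mathcal{P}_w(\tau,z)$ for $\re(w)$ large, and this persists under continuation to $w=0$, giving $H^*_{\gamma z}(\tau)=H^*_z(\tau)$. The remaining two ingredients of the polar harmonic Maass form property, namely annihilation by $\Delta_{0,z}$ away from the singularities and the finite order of the singularities, I would import from the weight-zero resolvent-kernel theory in \cite[Lemma 4.4 and Proposition 5.1]{BKweight0}: each summand is an eigenfunction of $\Delta_{0,z}$ whose eigenvalue vanishes at $w=0$, a property that survives the continuation.

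Next I would pin down the singularities. As a function of $z$, the $M$-th summand has its only pole where $M\tau-z=0$, so poles occur exactly along the orbit $\SL_2(\Z)\tau$, which the modularity just proved collapses to the single point $z\equiv\tau$ in $\SL_2(\Z)\backslash\H$. The key observation is that, since $\tau$ is not an elliptic fixed point, $\Stab_{\SL_2(\Z)}(\tau)=\{\pm I\}$, and both $M=I$ and $M=-I$ contribute the identical term $\varphi_w(\tau,z)$ (as $j(-I,\tau)^2=1$), whereas all other terms are regular at $z=\tau$. Thus the entire singular part at $z=\tau$ is $-\frac{1}{2\pi}\cdot 2\varphi_0(\tau,z)=-\frac{1}{\pi}\cdot\frac{y}{(\tau-z)(\tau-\bar z)}$, with no continuation needed for this finite piece. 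I would then verify that this differs from $-\frac{1}{4\pi v X_\tau(z)}$ by a function bounded near $z=\tau$, by checking that both have a simple pole there with the common residue $-\frac{i}{2\pi}$ (for the first, $\lim_{z\to\tau}(z-\tau)\varphi_0(\tau,z)=\frac{i}{2}$ times the prefactor $-\frac1\pi$; for the second, $\frac{z-\tau}{X_\tau(z)}=z-\bar\tau\to 2iv$). This establishes the claimed growth and, together with the decay of each summand as $\im(z)\to\infty$, the absence of any other singularity in $(\SL_2(\Z)\backslash\H)\cup\{i\infty\}$.

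Assertion (3) is then the promised direct calculation: from $\frac{z-\tau}{X_\tau(z)}=z-\bar\tau\to 2iv$ we obtain $\lim_{z\to\tau}(z-\tau)H_z^*(\tau)=-\frac{1}{4\pi v}\cdot 2iv=-\frac{i}{2\pi}$, so that $2\pi i\lim_{z\to\tau}(z-\tau)H_z^*(\tau)=1$. The genuine obstacle is not any of these residue computations but the two facts borrowed from \cite{BKweight0}: the existence of the meromorphic continuation of $\mathcal{P}_w(\tau,z)$ to $w=0$ and the fact that its value is annihilated by $\Delta_{0,z}$. In particular, one must confirm that this continuation contributes nothing to the residue at $z=\tau$, i.e. that $[\mathcal{P}_w]_{w=0}$ minus its two explicit singular terms $M=\pm I$ stays regular at $z=\tau$, so that the interchange of the limit $z\to\tau$ with the continued summation is legitimate and only the $M=\pm I$ terms survive.
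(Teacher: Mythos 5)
Your proposal is correct and follows essentially the same route as the paper, which simply cites \cite[Lemma 4.4 and Proposition 5.1]{BKweight0} for the analytic continuation and harmonicity and leaves \eqref{eqn:HNz*Residue} to ``a direct calculation''; your identification of the singular part with the $M=\pm I$ terms $-\frac{1}{\pi}\varphi_0(\tau,z)$ and the matching residues $-\frac{i}{2\pi}$ is exactly that calculation and checks out. You also correctly flag the one fact that genuinely must be imported from \cite{BKweight0}, namely that the continuation to $w=0$ contributes nothing further to the singularity at $z=\tau$.
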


We also require the behaviour of $H_z^*$ as $z\to i\infty$ (see \cite[(1.7)]{BKLOR} and \cite[Theorem 1.2]{BKLOR}).

\begin{lemma}\label{lem:Hinftyprop}
We have
\begin{equation}\label{eqn:Hlimit}
\lim_{z\to i\infty}H_{z}^*=-\widehat{E}_{2}.
\end{equation}
\end{lemma}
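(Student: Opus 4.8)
The plan is to show that the limit exists, to identify it as a weight two harmonic Maass form on $\SL_2(\Z)$ that is pole-free in $\H$ and bounded towards the cusp, to conclude from the structure of such forms that it is a scalar multiple of $\widehat{E}_2$, and finally to pin down the scalar.

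First I would fix $\tau\in\H$ which is not an elliptic fixed point and track the singularity of $\tau\mapsto H_z^*(\tau)$. By Lemma \ref{lem:Hz*prop} its only singularity lies where $\tau$ is congruent to $z$ modulo $\SL_2(\Z)$; as $\im(z)\to\infty$ this point escapes every fixed compact subset of a fundamental domain, so on such a subset $H_z^*$ is eventually smooth, satisfies weight two modularity, and is annihilated by $\Delta_{2,\tau}$. Combining this with the growth estimates of \cite{BKLOR} (equivalently, a direct analysis of the Poincar\'e series $\mathcal{P}_w$ and its continuation to $w=0$, each term $\varphi_w(M\tau,z)/(j(M,\tau)^2|j(M,\tau)|^{2w})$ being controlled as $\im(z)\to\infty$), the family $\{H_z^*\}$ is locally uniformly bounded on $\H$ and bounded towards the cusp. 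Hence every sequence $z\to i\infty$ has a subsequential limit $H_\infty^*$; weight two modularity and harmonicity pass to the limit, and $H_\infty^*$ has no singularities in $\H$.

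Next I would identify the space in which $H_\infty^*$ lives. It is a weight two harmonic Maass form on $\SL_2(\Z)$ that is holomorphic throughout $\H$ and bounded as $\tau\to i\infty$, so its shadow $\xi_2(H_\infty^*)$ is a weight zero holomorphic modular form that is holomorphic at the cusp, hence a constant. Since $\xi_2(\widehat{E}_2)$ is a nonzero constant, I can choose $c$ so that $\xi_2(H_\infty^*-c\,\widehat{E}_2)=0$; the difference is then a holomorphic weight two modular form, bounded at the cusp, i.e.\ an element of $M_2(\SL_2(\Z))=\{0\}$. Thus $H_\infty^*=c\,\widehat{E}_2$, and since $c$ does not depend on the chosen sequence, the full limit $\lim_{z\to i\infty}H_z^*=c\,\widehat{E}_2$ exists.

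It remains to show $c=-1$, and this I expect to be the main obstacle. The cleanest route is to match a single Fourier datum: compute the constant term (the $v$-independent part of the holomorphic part) of $H_z^*$ from the continued Poincar\'e series and let $z\to i\infty$, then compare with the constant term $1$ of $\widehat{E}_2$. Equivalently, one can compare shadows, showing $\xi_{2,\tau}(H_z^*)\to\xi_2(-\widehat{E}_2)$ as $z\to i\infty$. Either way the difficulty is the same: one must extract the Fourier expansion of the Poincar\'e series while simultaneously controlling the analytic continuation in $w$ to $w=0$ and the degeneration $z\to i\infty$ of the escaping pole. This is precisely the computation carried out in \cite{BKLOR}, whose results \cite[(1.7)]{BKLOR} and \cite[Theorem~1.2]{BKLOR} I would invoke to conclude $c=-1$, giving \eqref{eqn:Hlimit}.
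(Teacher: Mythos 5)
The paper gives no proof of this lemma at all: it is stated as a quotation of \cite[(1.7)]{BKLOR} and \cite[Theorem 1.2]{BKLOR}, which already contain the full statement. Your proposal reconstructs a plausible internal argument --- subsequential compactness of the family $\{H_z^*\}$, passage of weight two modularity and harmonicity to the limit, classification of the limit via $\xi_2$ and $M_2(\SL_2(\Z))=\{0\}$, and then a normalization step --- but for the normalization you invoke exactly the same two results of \cite{BKLOR} that the paper cites for the whole lemma. So in the end both arguments rest on the same external input; what your scaffolding buys is a clean reduction of that input to a single Fourier datum (the constant term), which is a reasonable way to organize the proof but does not make it independent of the reference.

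Two cautions. First, the step you pass over most quickly --- local uniform boundedness of $H_z^*$ on compacta (and uniform control toward the cusp) as $\im(z)\to\infty$ --- is where essentially all of the analytic work lives, since it requires controlling the analytic continuation of $\mathcal{P}_w$ to $w=0$ uniformly in $z$; asserting it ``from a direct analysis of the Poincar\'e series'' is not a proof, and if you are willing to quote \cite{BKLOR} for that, you may as well quote it for the lemma itself. Second, within this paper you must not shortcut via Lemma \ref{lem:G1inftyprop} (1) together with Proposition \ref{prop:dGs} (which would give the statement immediately), because the proof of Proposition \ref{prop:dGs} uses the present lemma to evaluate $\lim_{z\to i\infty}\mathbb{G}_\tau(z)$; that route is circular. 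Your proposal avoids this trap, which is to its credit.
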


We next relate $H_{z}^*$ to the resolvent kernel for $\SL_2(\Z)$ (see \cite{Hejhal} for a full treatment). 
The \begin{it}resolvent kernel\end{it} 
is defined by the analytic continuation in $w$ of 
\[
G_{w}(z,\tau):=\sum_{M\in\PSL_2(\Z)} g_{w}(Mz,\tau),
\]
where
\begin{equation*}\label{eqn:gsdef}
g_w(z,\tau):=-\frac{\Gamma(w)^2}{\Gamma(2w)}\left(\frac{2}{1+\cosh(d(z,\tau))}\right)^{w}{_2F_1}\left(w,w;2w;\frac{2}{1+\cosh(d(z,\tau))}\right).
\end{equation*}
Here ${_2F_1}(a,b;c;Z)$ is Gauss'
 hypergeometric function. Moreover, $d(z,\tau)$ is the hyperbolic distance between $z$ and $\tau$, which satisfies
\begin{equation*}
\cosh(d(z,\tau))=1+\frac{|z-\tau|^2}{2vy}.
\end{equation*}

The function $G_w(z,\tau)$ is invariant under $\SL_2(\Z)$ in both variables. Since $H_{z}^*(\tau)$ satisfies weight zero modularity in $z$ and weight two modularity in $\tau$ (see Lemma \ref{lem:Hz*prop} and \cite[Theorem 1.1]{BKLOR}), it is natural to apply the Maass raising operator in $\tau$. We therefore define
\begin{equation*}
\mathcal{G}_{w}(z,\tau):=\frac{1}{2i}R_{0,\tau}\left(G_{w}(z,\tau)\right).
\end{equation*}
We next use the invariance in both variables of $G_{w}(z,\tau)$ under $\SL_2(\Z)$  and the well-known facts that it is an eigenfunction under $\Delta_0$ in both variables (see \cite[property (b) in Section 5]{GrossZagierSingular}) and that for $\tau$ fixed it has a unique logarithmic singularity in $\SL_2(\Z)\backslash \H$ at $z=\tau$  (for example, see \cite[property (a) in Section 5]{GrossZagierSingular}). A direct calculation using 
Lemma \ref{lem:RaiseLowerProp}
 then yields the following.
\begin{lemma}\label{lem:calHmodular}
The function $z\mapsto \mathcal{G}_{w}(z,\tau)$ is $\SL_2(\Z)$-invariant and it is an eigenfunction with eigenvalue $w(1-w)$ under $\Delta_{0,z}$. The function $\tau\mapsto \mathcal{G}_{w}(z,\tau)$ satisfies weight two modularity and has eigenvalue $w(1-w)$ under $\Delta_{2,\tau}$. 
Moreover, for $\re(w)\geq 1$ and $\tau$ not an elliptic fixed point, $z\mapsto \mathcal{G}_w(z,\tau)$ exhibits the growth $\frac{1}{2i}R_{0,\tau}(g_w(z,\tau))$ at $z=\tau$, 
which simplifies in the special case $w=1$ to 
\begin{equation}\label{eqn:g1HResidue}
\lim_{z\to \tau} (z-\tau)\mathcal{G}_{1}(z,\tau)=-1,
\end{equation}
and does not grow at any point which is $\SL_2(\Z)$-inequivalent to $\tau$.
\end{lemma}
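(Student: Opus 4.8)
The plan is to derive all four structural assertions directly from the corresponding properties of the resolvent kernel $G_w(z,\tau)$ recorded just before the lemma, exploiting that the raising operator $R_{0,\tau}$ acts only in the variable $\tau$ and hence commutes with every operation in $z$. For the $z$-invariance and the $\Delta_{0,z}$-eigenvalue, I would note that $G_w(z,\tau)$ is $\SL_2(\Z)$-invariant in $z$ and satisfies $\Delta_{0,z}G_w = w(1-w)G_w$; since $R_{0,\tau}$ commutes with the weight zero slash action in $z$ and with $\Delta_{0,z}$, both properties pass verbatim to $\mathcal{G}_w(z,\tau) = \frac{1}{2i}R_{0,\tau}(G_w(z,\tau))$. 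For the behaviour in $\tau$, I would instead invoke Lemma \ref{lem:RaiseLowerProp}: part (1) turns the weight zero modularity of $\tau\mapsto G_w(z,\tau)$ into weight two modularity of $\tau\mapsto R_{0,\tau}(G_w(z,\tau))$, and part (2), applied with $k=0$ and $\lambda=w(1-w)$, shows that $R_{0,\tau}(G_w)$ is a $\Delta_{2,\tau}$-eigenfunction with eigenvalue $w(1-w)+0=w(1-w)$. Dividing by $2i$ is harmless throughout.

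For the growth statement I would split off the singular term from the defining sum. For $\tau$ not an elliptic fixed point and $z$ near $\tau$, only the identity coset contributes a singularity, so writing $G_w(z,\tau)=g_w(z,\tau)+\sum_{M\neq\id}g_w(Mz,\tau)$ the remainder is real-analytic in a neighbourhood of $z=\tau$. Applying $R_{0,\tau}$ and using local uniform convergence to differentiate termwise, the remainder stays bounded near $z=\tau$, so $\mathcal{G}_w(z,\tau)$ exhibits exactly the growth $\frac{1}{2i}R_{0,\tau}(g_w(z,\tau))$. The same splitting handles the final claim: at a point $\SL_2(\Z)$-inequivalent to $\tau$ no summand $g_w(Mz,\tau)$ is singular, so $G_w$ and hence $R_{0,\tau}(G_w)$ remain smooth there.

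It remains to evaluate the $w=1$ case explicitly. Here I would use $\Gamma(1)^2/\Gamma(2)=1$ together with the elementary identity ${_2F_1}(1,1;2;Z)=-\ln(1-Z)/Z$ to collapse $g_1(z,\tau)$ to $\ln(1-\frac{2}{1+\cosh(d(z,\tau))})=\ln\frac{\cosh(d(z,\tau))-1}{\cosh(d(z,\tau))+1}$. Inserting $\cosh(d(z,\tau))=1+\frac{|z-\tau|^2}{2vy}$ and simplifying with $|z-\bar\tau|^2-|z-\tau|^2=4vy$ identifies this with $\ln|X_{\tau}(z)|^2$. Since $R_{0,\tau}=2i\partial_\tau$ at weight zero and the holomorphic $\tau$-derivative only sees the holomorphic factors, one obtains $\frac{1}{2i}R_{0,\tau}(g_1(z,\tau))=\frac{1}{\tau-z}-\frac{1}{\tau-\bar z}$; the first term carries the pole and the second tends to $\frac{1}{2iv}$, so multiplying by $z-\tau$ and letting $z\to\tau$ yields $-1$, which is \eqref{eqn:g1HResidue}.

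The routine points are the commutation arguments and the termwise differentiation, for which the only care needed is that $\tau$ is non-elliptic so the diagonal singularity comes solely from the identity coset. The genuine computation — and the part most prone to sign or branch errors — is the $w=1$ simplification: correctly reducing the hypergeometric expression to $\ln|X_{\tau}(z)|^2$ and then tracking which factors survive the Wirtinger derivative $\partial_\tau$ so that exactly the singular piece $(\tau-z)^{-1}$ is isolated. I expect this explicit reduction, rather than the structural transfer from $G_w$, to be the main obstacle.
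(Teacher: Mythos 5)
Your proposal is correct and follows essentially the same route the paper intends: transfer the known properties of $G_w$ (two-variable invariance, $\Delta_0$-eigenvalue, unique logarithmic singularity at $z=\tau$) through $R_{0,\tau}$ via Lemma \ref{lem:RaiseLowerProp}, and then evaluate the $w=1$ singular term explicitly. Your reduction $g_1(z,\tau)=\ln\lvert X_\tau(z)\rvert^2$ and the resulting $\frac{1}{2i}R_{0,\tau}(g_1)=\frac{1}{\tau-z}-\frac{1}{\tau-\bar z}$ check out and yield the residue $-1$ as required.
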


We additionally require the growth of $\mathcal{G}_w(z,\tau)$ as $z\to i\infty$ or $\tau\to i\infty$, which can be obtained from \cite[(6.5)]{Hejhal}. 
\begin{lemma}\label{lem:G1inftyprop}
Assume that $\re(w)\geq 1$.
\begin{enumerate}[leftmargin=*, label=\rm(\arabic*)]
\item For $y\ge v+\frac1v+\varepsilon$ 
with $\varepsilon>0$, we have 
\begin{equation*}
\mathcal{G}_{w}(z,\tau)= \frac{2\pi i}{2w-1} y^{1-w} R_{0,\tau}\left(E_{0}(w;\tau)\right) +O_{w,\varepsilon}\left(\left(v+\frac{1}{v}\right)^{\frac{1}{2}}e^{\frac{\pi}{2}\left(v+\frac1v-y\right)}\right),
\end{equation*}
where the error is locally uniform around $w=1$. In particular, as $y\to\infty$ 
\[
\mathcal{G}_{1}(z,\tau)=2\pi i \widehat{E}_{2}(\tau) +  O_{v}\left(e^{-\frac{\pi y}{2}}\right).
\]

\item 

For
 $v\ge y+\frac1y+\varepsilon$ with $\varepsilon>0$, we have  
\[
\mathcal{G}_w(z,\tau) = \frac{2\pi i (w-1)}{1-2w} v^{-w} E_0(w;z) +O_{w,\varepsilon}\left(\left(y+\frac{1}{y}\right)^{\frac{1}{2}}e^{\frac{\pi}{4}\left(y+\frac1y-v\right)}\right),
\]
where the bound is again locally uniform around $w=1$.
\end{enumerate}
\end{lemma}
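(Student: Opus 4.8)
The plan is to read off both asymptotics from the Fourier expansion of the resolvent kernel $G_{w}(z,\tau)$ at the cusp $i\infty$ recorded in \cite[(6.5)]{Hejhal}, and then to apply the operator $\frac{1}{2i}R_{0,\tau}$ that defines $\mathcal{G}_{w}$. For fixed $\tau$, the function $z\mapsto G_{w}(z,\tau)$ is $\SL_2(\Z)$-invariant, is a $\Delta_{0,z}$-eigenfunction with eigenvalue $w(1-w)$, and (for $\re(w)\ge 1$) decays into the cusp away from its singularity at $z=\tau$. Hence its Fourier expansion in $x=\re(z)$ has a zeroth coefficient solving the eigenvalue ODE, namely of the form $A(\tau)y^{w}+B(\tau)y^{1-w}$, and the decay forces the $y^{w}$-term to vanish; the surviving coefficient $B(\tau)$ is, by the eigenfunction and modularity properties in $\tau$, a multiple of $E_{0}(w;\tau)$. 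Matching the normalization in \cite[(6.5)]{Hejhal} against the definition of $g_{w}$ identifies the leading constant term as $\tfrac{-4\pi}{2w-1}\,y^{1-w}E_{0}(w;\tau)$, while the nonzero Fourier modes are governed by Whittaker/Bessel functions whose arguments grow with the hyperbolic separation, via $\cosh(d(z,\tau))=1+\tfrac{|z-\tau|^{2}}{2vy}$.

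\textbf{Part (1).} Here I fix $\tau$ and let $y\to\infty$. Applying $\frac{1}{2i}R_{0,\tau}$ to the leading term (which acts only on $E_{0}(w;\tau)$, since $y^{1-w}$ is constant in $\tau$) produces
\[
\frac{1}{2i}\cdot\frac{-4\pi}{2w-1}\,y^{1-w}R_{0,\tau}\!\left(E_{0}(w;\tau)\right)=\frac{2\pi i}{2w-1}\,y^{1-w}R_{0,\tau}\!\left(E_{0}(w;\tau)\right),
\]
which is exactly the claimed main term. The error is the raising of the tail of the Fourier series; the Bessel asymptotics, together with the region constraint $y\ge v+\frac1v+\varepsilon$, yield the bound $O_{w,\varepsilon}\bigl((v+\frac1v)^{\frac12}e^{\frac{\pi}{2}(v+\frac1v-y)}\bigr)$, locally uniformly in $w$ near $1$. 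Specializing to $w=1$ gives $y^{1-w}=1$ and $\tfrac{2\pi i}{2w-1}=2\pi i$, and by the second assertion of Lemma \ref{lem:E0deriv} one has $R_{0,\tau}(E_{0}(1;\tau))=\widehat{E}_{2}(\tau)$, so that $\mathcal{G}_{1}(z,\tau)=2\pi i\,\widehat{E}_{2}(\tau)+O_{v}(e^{-\frac{\pi y}{2}})$, as asserted.

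\textbf{Part (2).} Now I exploit the symmetry $G_{w}(z,\tau)=G_{w}(\tau,z)$ and expand in $u=\re(\tau)$, so that for $v\to\infty$ the leading constant term is $\tfrac{-4\pi}{2w-1}\,v^{1-w}E_{0}(w;z)$. This time $R_{0,\tau}=i\partial_{u}+\partial_{v}$ acts on the very variable going to the cusp; since the leading term depends on $\tau$ only through $v^{1-w}$ and $E_{0}(w;z)$ is constant in $\tau$, the computation $R_{0,\tau}(v^{1-w})=(1-w)v^{-w}$ gives
\[
\frac{1}{2i}\cdot\frac{-4\pi}{2w-1}\,(1-w)\,v^{-w}E_{0}(w;z)=\frac{2\pi i(w-1)}{1-2w}\,v^{-w}E_{0}(w;z),
\]
matching the claim. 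Because $R_{0,\tau}$ now differentiates the same $\tau$-expansion whose nonzero modes carry the $v$-decay, the effective exponential rate is the one appearing in the stated estimate, giving $O_{w,\varepsilon}\bigl((y+\frac1y)^{\frac12}e^{\frac{\pi}{4}(y+\frac1y-v)}\bigr)$ for $v\ge y+\frac1y+\varepsilon$.

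The main obstacle is the rigorous control of the error terms. Concretely one must: (i) justify applying $R_{0,\tau}$ term-by-term to the Fourier expansion and confirm that the raised tail stays of the stated order, most cleanly by bounding $\tau$-derivatives of $G_{w}$ through Cauchy estimates or the explicit Whittaker coefficients in \cite[(6.5)]{Hejhal} — this differentiation in the cusp variable is precisely what distinguishes the exponent in Part (2) from that in Part (1); (ii) extract the precise rates $\frac{\pi}{2}$ and $\frac{\pi}{4}$ together with the prefactors $(v+\frac1v)^{1/2}$ and $(y+\frac1y)^{1/2}$ from the large-argument asymptotics, using the constraints $y\ge v+\frac1v+\varepsilon$ and $v\ge y+\frac1y+\varepsilon$ to separate the two points hyperbolically; and (iii) maintain local uniformity in $w$ near $1$ through the analytic continuation, so that the delicate $w=1$ specialization (eigenvalue $0$ sits at the bottom of the spectrum) is legitimate. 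The identification of the constant $\tfrac{-4\pi}{2w-1}$ from Hejhal's normalization is bookkeeping, but as a consistency check it reproduces at $w=1$ the limit $\lim_{z\to i\infty}\mathcal{G}_{1}(z,\tau)=2\pi i\,\widehat{E}_{2}(\tau)$, which is compatible with Lemma \ref{lem:Hinftyprop}.
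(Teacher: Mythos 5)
Your proposal follows exactly the route the paper intends: the paper offers no written proof beyond citing the Fourier expansion of the resolvent kernel in \cite[(6.5)]{Hejhal}, and you correctly read off the constant term $\tfrac{-4\pi}{2w-1}y^{1-w}E_0(w;\tau)$ (respectively its analogue in the $\tau$-aspect via the symmetry of $G_w$), apply $\tfrac{1}{2i}R_{0,\tau}$, and recover both stated main terms, including the correct specialization at $w=1$ via Lemma \ref{lem:E0deriv} and the consistency check against Proposition \ref{prop:dGs}. The remaining items you flag (term-by-term differentiation of the tail, the exponential rates, and local uniformity near $w=1$) are exactly the bookkeeping the paper leaves to Hejhal, so the argument is essentially the paper's own.
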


The function $\mathcal{G}_{w}(z,\tau)$ is related to $H_{z}^*$ via the following proposition.
\begin{proposition}\label{prop:dGs}
We have
\begin{equation*}
\mathcal{G}_{1}(z,\tau) = -2\pi i H_{z}^*(\tau).
\end{equation*}
\end{proposition}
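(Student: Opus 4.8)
The claim is the identity $\mathcal{G}_{1}(z,\tau) = -2\pi i\, H_{z}^*(\tau)$, and the natural strategy is to show that both sides are the \emph{same} weight two polar harmonic Maass form in $\tau$ (equivalently, weight zero in $z$) by matching their defining analytic data: modularity, the Laplace eigenvalue, the singularity at $z=\tau$, and the behaviour at the cusp $i\infty$. The plan is to compare the two functions $F_1(z,\tau):=\mathcal{G}_{1}(z,\tau)$ and $F_2(z,\tau):=-2\pi i\, H_{z}^*(\tau)$ and to verify that their difference $D(z,\tau):=F_1-F_2$ is identically zero. Since both sides are harmonic (the eigenvalue $w(1-w)$ vanishes at $w=1$, by Lemma~\ref{lem:calHmodular}, matching the harmonicity of $H_z^*$ from Lemma~\ref{lem:Hz*prop}) and both are $\SL_2(\Z)$-invariant in $z$ and weight two modular in $\tau$, the difference $D$ is itself a weight zero (resp.\ weight two) polar harmonic Maass form, and it suffices to control its singularities.

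First I would check that the singularities at $z=\tau$ cancel. By Lemma~\ref{lem:Hz*prop}, equation~\eqref{eqn:HNz*Residue}, we have $2\pi i \lim_{z\to\tau}(z-\tau)H_z^*(\tau)=1$, so $\lim_{z\to\tau}(z-\tau)F_2(z,\tau)=-1$. On the other side, Lemma~\ref{lem:calHmodular}, equation~\eqref{eqn:g1HResidue}, gives $\lim_{z\to\tau}(z-\tau)\mathcal{G}_1(z,\tau)=-1$, so $\lim_{z\to\tau}(z-\tau)F_1(z,\tau)=-1$ as well. Hence the leading singular parts agree and $D(z,\tau)$ is bounded near $z=\tau$; since neither function has any other singularity in $\SL_2(\Z)\backslash\H$ (again by Lemmas~\ref{lem:Hz*prop} and~\ref{lem:calHmodular}), $D$ extends to a genuine (non-polar) harmonic function on all of $\H\setminus\mathcal{S}$ that is $\SL_2(\Z)$-invariant in $z$.

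Next I would pin down the behaviour at the cusp to conclude $D\equiv 0$. Letting $y\to\infty$, Lemma~\ref{lem:G1inftyprop}(1) gives $\mathcal{G}_1(z,\tau)=2\pi i\,\widehat{E}_2(\tau)+O_v(e^{-\pi y/2})$, while Lemma~\ref{lem:Hinftyprop} gives $\lim_{z\to i\infty}H_z^*=-\widehat{E}_2$, so $F_2(z,\tau)\to 2\pi i\,\widehat{E}_2(\tau)$ as well. Thus $D(z,\tau)\to 0$ as $z\to i\infty$. A bounded $\SL_2(\Z)$-invariant harmonic function on $\H$ which decays at the cusp must vanish identically (by the maximum principle for harmonic functions on the compact quotient, or by expanding in a Fourier series and noting the constant term and all exponentially growing/decaying modes are forced to be zero). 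This yields $D\equiv 0$, which is the asserted identity. The main obstacle is the cusp comparison: one must ensure the constant terms at $i\infty$ match \emph{exactly} and that the remaining harmonic function genuinely decays, so that no constant or polynomially-growing term in the Fourier expansion survives; once the residues at $z=\tau$ and the constant terms at the cusp are both verified to coincide, the rigidity of harmonic forms on $\SL_2(\Z)\backslash\H$ closes the argument.
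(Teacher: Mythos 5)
Your proposal is correct and follows essentially the same route as the paper: both form the difference $\mathcal{G}_1(z,\tau)+2\pi i H_z^*(\tau)$, cancel the singularity at $z=\tau$ via \eqref{eqn:HNz*Residue} and \eqref{eqn:g1HResidue}, and then use the matching constant $2\pi i\widehat{E}_2(\tau)$ at the cusp (Lemmas \ref{lem:Hinftyprop} and \ref{lem:G1inftyprop}) together with the rigidity of singularity-free $\SL_2(\Z)$-invariant harmonic functions to conclude the difference vanishes.
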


\begin{proof}
Lemma \ref{lem:Hz*prop} and Lemma \ref{lem:calHmodular} (with $w=1$) imply that 
\[
\mathbb{G}_\tau(z):=\lim_{\z\to z} \left(\mathcal{G}_{1}(\z,\tau)+2\pi i H^*_{\z}(\tau)\right),
\]
is a polar harmonic Maass form of weight zero on $\SL_2(\Z)$.  We claim that it vanishes identically. Without loss of generality, it suffices to assume that $\tau\in\H$ is not an elliptic fixed point. For $\mathfrak{z}\in\H$ which is not $\SL_2(\Z)$-equivalent to $\tau$, $\mathbb{G}_{\tau}$ does not have a singularity at $z=\mathfrak{z}$ because neither summand has a singularity by Lemmas \ref{lem:Hz*prop} and \ref{lem:calHmodular}. Moreover, \eqref{eqn:HNz*Residue} and \eqref{eqn:g1HResidue} yield that $\lim_{z\to\tau}(z-\tau)\mathbb{G}_\tau(z)=0.$
We conclude that $\mathbb{G}_{\tau}$ is an $\SL_2(\Z)$-invariant harmonic function that does not have any singularities.  Since the only weight zero harmonic functions on $\SL_2(\Z)$ without singularities are constant, we conclude that $\mathbb{G}_{\tau}(z)$ is independent of $z$ and $\lim_{z\to i\infty}\mathbb{G}_\tau(z)=0$ by Lemma \ref{lem:Hinftyprop} and Lemma \ref{lem:G1inftyprop} (1) implies that $\mathbb{G}_{\tau}(z)=0$ for all $z\in\H$, yielding the result. 
\end{proof}


\section{The definition of (generalized) \texorpdfstring{$L$}{L}-functions and the proof of Theorem \ref{thm:main} (1)--(3)}\label{sec:generalizeddef}

In this section we define the relevant (generalized) $L$-functions. In Section \ref{sec:E2L-Fun}, we relate an $L$-function to the weight two Eisenstein series. This $L$-function plays an important role in the proof of Theorem \ref{thm:main} (4) in Section \ref{sec:limit}. We then define $L_z(s)$ in Section \ref{sec:LFunDef} and investigate its main properties, proving Theorem \ref{thm:main} (1)--(3).

\subsection{An \texorpdfstring{$L$}{L}-function associated to the weight two Eisenstein series}\label{sec:E2L-Fun}
In order to relate $L_z(s)$ to the Riemann zeta function and prove Theorem \ref{thm:main} (4), we first 
recall the well-known construction of
 an $L$-function for the weight two Eisenstein series $E_2$ and its completion $\widehat{E}_2$. 
Following a trick of Riemann \cite{Riemann} (used to obtain the zeta function as a regularized Mellin transform), 
for $t_0>0$ we
 define 
\begin{multline}\label{eqn:L*E2def}
L\left(\widehat{E}_2,s\right):=\int_{t_0}^{\infty}\left(\widehat{E}_2(it)-1+\frac{3}{\pi t}\right)t^{s-1}dt\\
+\int_0^{t_0}\left(\widehat{E}_2(it)+\frac{1}{t^2}-\frac{3}{\pi t}\right)t^{s-1}dt-\frac{t_0^s}{s}-\frac{t_0^{s-2}}{s-2} + \frac{ 6}{\pi} \frac{t_0^{s-1}}{s-1}.
\end{multline}
We give the main properties of $L(\widehat{E}_2,s)$ and evaluate it in the following lemma, which may be easily proven using the modularity of $\widehat{E}_2$, the growth of $\widehat{E}_2(it)$ as $t\to\infty$, and the identity (see e.g. \cite[Theorem 291]{HardyWright})
\begin{equation}\label{zetap}
	\zeta(s)\zeta(s-\ell) =\sum_{n= 1}^\infty \frac{\sigma_{\ell}(n)}{n^s}.
\end{equation}

\begin{lemma}\label{lem:LE2eval}
\ \begin{enumerate}[leftmargin=*, label=\rm(\arabic*)]
\item The integrals on the right-hand side of \eqref{eqn:L*E2def} converge absolutely and define meromorphic functions on the complex $s$-plane with simple poles for $s\in\{0,1,2\}$.

\item The definition of $L(\widehat{E}_2,s)$ is independent of the choice of $t_0$.

\item We have
\[
L\left(\widehat{E}_2,s\right)=-\frac{24}{(2\pi)^{s}}\Gamma(s)\zeta(s)\zeta(s-1).
\]

\item We have
\[
L\left(\widehat{E}_2,2-s\right)=-L\left(\widehat{E}_2,s\right).
\]
\end{enumerate}
\end{lemma}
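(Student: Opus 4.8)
The whole lemma is an instance of Riemann's regularization trick, and the two inputs that drive everything are the Fourier expansion of $\widehat{E}_2$ together with its weight two modularity. From $\widehat{E}_2(it)=E_2(it)-\tfrac{3}{\pi t}$ and $E_2(it)-1=-24\sum_{n\ge1}\sigma_1(n)e^{-2\pi nt}$ I first record the two asymptotics that justify the shape of \eqref{eqn:L*E2def}: as $t\to\infty$,
\[
\widehat{E}_2(it)-1+\tfrac{3}{\pi t}=E_2(it)-1=O\!\left(e^{-2\pi t}\right),
\]
while the $S$-transformation $\widehat{E}_2(i/t)=-t^2\widehat{E}_2(it)$ (weight two modularity under $\smallSmatrix$) gives, as $t\to0^+$, the expansion $\widehat{E}_2(it)=-\tfrac1{t^2}+\tfrac{3}{\pi t}+O(t^{-2}e^{-2\pi/t})$, so that the second integrand satisfies $\widehat{E}_2(it)+\tfrac1{t^2}-\tfrac{3}{\pi t}=O(t^{-2}e^{-2\pi/t})$.

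For (1): both regularized integrands decay super-exponentially (at $\infty$ and at $0$ respectively), so each integral converges absolutely for every $s\in\C$ and defines an entire function of $s$ (differentiate under the integral / Morera). Hence the only poles of $L(\widehat{E}_2,s)$ are the three simple poles at $s=0,1,2$ coming from the elementary terms $-\tfrac{t_0^s}{s}$, $\tfrac{6}{\pi}\tfrac{t_0^{s-1}}{s-1}$, and $-\tfrac{t_0^{s-2}}{s-2}$. For (2) I would differentiate the right-hand side of \eqref{eqn:L*E2def} in $t_0$: by the fundamental theorem of calculus the boundary contributions of the two integrals produce $\big(1+\tfrac1{t_0^2}-\tfrac{6}{\pi t_0}\big)t_0^{s-1}$, which is cancelled exactly by the $t_0$-derivative of the three elementary terms, so $\partial_{t_0}L(\widehat{E}_2,s)=0$.

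For (3), the point is that \eqref{eqn:L*E2def} is exactly the regularization of the (divergent) Mellin transform of $\widehat{E}_2(it)$, and the $-\tfrac{3}{\pi t}$ piece contributes nothing after regularization, so that $L(\widehat{E}_2,s)$ coincides with the honest Mellin transform of $E_2(it)-1$. Concretely, for $\re(s)>2$ I would split $\int_0^\infty(E_2(it)-1)t^{s-1}dt=\int_0^{t_0}+\int_{t_0}^\infty$, rewrite $E_2-1=\big(\widehat{E}_2-1+\tfrac{3}{\pi t}\big)$ in the tail and $E_2-1=\big(\widehat{E}_2+\tfrac1{t^2}-\tfrac{3}{\pi t}\big)-t^{-2}+\tfrac{6}{\pi t}-1$ near $0$, and integrate the three elementary pieces over $(0,t_0)$ explicitly; the result is precisely the right-hand side of \eqref{eqn:L*E2def}. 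Then expanding $E_2(it)-1=-24\sum_{n\ge1}\sigma_1(n)e^{-2\pi nt}$ and integrating term by term gives $-\tfrac{24\Gamma(s)}{(2\pi)^s}\sum_{n\ge1}\sigma_1(n)n^{-s}$, which equals $-\tfrac{24}{(2\pi)^s}\Gamma(s)\zeta(s)\zeta(s-1)$ by \eqref{zetap} with $\ell=1$. Since both sides are meromorphic on $\C$ (by (1)) and agree on $\re(s)>2$, they agree everywhere.

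For (4), rather than invoking the functional equation of $\zeta$, I would obtain the sign-flip directly from the substitution $t\mapsto1/t$ in \eqref{eqn:L*E2def} (taking $t_0=1$). Using $\widehat{E}_2(i/t)=-t^2\widehat{E}_2(it)$, the first integral of $L(\widehat{E}_2,2-s)$ transforms into $-1$ times the second integral of $L(\widehat{E}_2,s)$, the second integral transforms into $-1$ times the first, and the three elementary terms at $2-s$ reassemble to $-1$ times those at $s$; summing gives $L(\widehat{E}_2,2-s)=-L(\widehat{E}_2,s)$. The only genuine bookkeeping—and the step where sign and exponent errors are easiest to make—is matching the transformed elementary terms in (4) and verifying the exact cancellations of the $t_0$-derivative in (2) and of the boundary pieces in (3); none of these is conceptually hard, but all demand care.
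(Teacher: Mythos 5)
Your proposal is correct and uses exactly the ingredients the paper names for this lemma (the weight two modularity of $\widehat{E}_2$, its exponential decay toward $i\infty$, and the identity \eqref{zetap} with $\ell=1$), merely fleshing out the computation the paper leaves as "easily proven." All the individual verifications---the cancellation of the $t_0$-derivative in (2), the reassembly of the elementary terms in (3), and the sign-flip under $t\mapsto 1/t$ in (4)---check out.
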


\subsection{Definition of a generalized $L$-function for polar harmonic Maass forms}\label{sec:LFunDef}
The goal of this section is to define $L_z(s)$ and to prove Theorem \ref{thm:main} (1)--(3). 

For $s,s_0,w\in\C$ with $\re(s),\re(w)$ sufficiently large, we set
\begin{equation}\label{eqn:LNz*gendef}
L_{z}(w,s_0;s):=\int_{0}^{\infty} \mathcal{G}_{w}(z,it)r_{z}(it)^{s_0}r_{z}\left(\frac{i}{t}\right)^{s_0} t^{s-1} dt,\qquad
r_{z}(\tau):=|X_{z}(\tau)|.
\end{equation}

\begin{lemma}\label{lem:absconverge}
Let $s_0,s\in\C$ and $z\in\H$ and if $z$ is equivalent under $\SL_2(\Z)$ to a point in $i\R^+$, then assume that $\re(s_0)$ is sufficiently large. Then the integral defining $L_{z}(w,s_0;s)$ converges absolutely and locally uniformly for $w\in\C$ with $\re(w)$ sufficiently large (depending on $\re(s)$).
\end{lemma}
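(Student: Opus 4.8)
The plan is to split the integral in \eqref{eqn:LNz*gendef} into three regimes, namely $t\to\infty$, $t\to 0$, and the compact interior $\delta\le t\le 1/\delta$, and to bound the absolute value of the integrand in each. The two endpoint analyses are essentially identical: since $\tau\mapsto\mathcal{G}_w(z,\tau)$ is weight two modular (Lemma~\ref{lem:calHmodular}) and $S=\smallSmatrix$ sends $it\mapsto i/t$, we have $\mathcal{G}_w(z,i/t)=-t^2\,\mathcal{G}_w(z,it)$, while the factor $r_z(it)^{s_0}r_z(i/t)^{s_0}$ is symmetric under $t\mapsto 1/t$. Hence the behaviour near $t=0$ can be read off from the behaviour near $t=\infty$ after applying this $S$-relation, and it suffices to control the integrand for large $t$ and on a compact subinterval.

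For the tail $t\to\infty$ I would invoke Lemma~\ref{lem:G1inftyprop}~(2) with $v=t$, which yields $\mathcal{G}_w(z,it)\ll_{w} t^{-\re(w)}$ locally uniformly in $w$ (the error being exponentially small). Since $it\to i\infty$ and $i/t\to 0$ force $r_z(it)\to 1$ and $r_z(i/t)\to 1$, the integrand is $\ll_w t^{\re(s)-\re(w)-1}$, which is integrable at $\infty$ once $\re(w)>\re(s)$. Feeding the same bound into the $S$-relation $\mathcal{G}_w(z,it)=-t^{-2}\mathcal{G}_w(z,i/t)$ gives $\mathcal{G}_w(z,it)\ll_w t^{\re(w)-2}$ as $t\to 0$, so the piece near the origin contributes $\ll_w t^{\re(w)+\re(s)-3}$, integrable once $\re(w)>2-\re(s)$. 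Thus both endpoints converge absolutely and locally uniformly in $w$ as soon as $\re(w)>\max(\re(s),2-\re(s))$, which is precisely the meaning of ``$\re(w)$ sufficiently large depending on $\re(s)$''.

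It remains to treat the compact interval, where the only obstruction is the poles of $t\mapsto\mathcal{G}_w(z,it)$. By Lemma~\ref{lem:calHmodular} these occur exactly at the $t$ for which $it$ is $\SL_2(\Z)$-equivalent to $z$; away from them $\mathcal{G}_w(z,it)$ is continuous and the $r_z$-factors are bounded, so the contribution is finite. If $z\notin\mathcal{S}$ there are no such $t$ and we conclude with no condition on $s_0$. If instead $z$ is equivalent to a point of $i\R^+$, then—using that $abcd\ge 0$ for every $\smallabcd\in\SL_2(\Z)$, so that the geodesic $i\R^+$ meets the $\SL_2(\Z)$-orbit of $iy_0\in i\R^+$ only in $\{iy_0,\,i/y_0\}$—the interior poles lie at the two points where $it=z$ and where $i/t=z$. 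Near the first, $\mathcal{G}_w(z,it)$ has at worst a simple pole (the singularity of $g_w$ is logarithmic and $R_{0,\tau}$ converts it into a simple pole, cf.\ \eqref{eqn:g1HResidue}), whereas $r_z(it)=\bigl|\tfrac{it-z}{it-\bar z}\bigr|$ vanishes linearly, so $r_z(it)^{s_0}\ll|it-z|^{\re(s_0)}$ and the integrand is $\ll|it-z|^{\re(s_0)-1}$, integrable once $\re(s_0)$ is large. The pole where $i/t=z$ is tamed identically by the factor $r_z(i/t)^{s_0}$.

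The main obstacle is this last step: one must pin down the exact location of the interior poles along the imaginary axis—the elementary but crucial fact that $i\R^+$ meets an orbit only at $iy_0$ and $i/y_0$—and then verify that the linear vanishing of the two $r_z$-factors, raised to the power $s_0$, dominates the simple poles of $\mathcal{G}_w$ there. By contrast, the endpoint estimates are routine once the growth statement of Lemma~\ref{lem:G1inftyprop} and the $S$-modularity of $\mathcal{G}_w$ in $\tau$ are in hand.
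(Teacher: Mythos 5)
Your proposal follows essentially the same route as the paper's (very terse) proof: the paper also uses the weight two modularity of $\tau\mapsto\mathcal{G}_w(z,\tau)$ under $S$ to fold the integral over $(0,t_0)$ into an integral over $(1/t_0,\infty)$ with $s\mapsto 2-s$, and then cites Lemma \ref{lem:G1inftyprop} (2) for the tail; your exponent bookkeeping ($\re(w)>\max(\re(s),2-\re(s))$) is exactly what that citation delivers. You go further than the paper in two respects. First, you spell out the interior analysis, including the clean observation that $ad-bc=1$ forces $abcd\ge 0$ and hence that the geodesic $i\R^+$ meets the orbit of $iy_0$ only at $iy_0$ and $i/y_0$; the paper leaves all of this implicit. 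Second, precisely because you spell it out, one imprecision becomes visible: you conflate ``$z$ is $\SL_2(\Z)$-equivalent to a point of $i\R^+$'' with ``$z\in i\R^+$''. If $z=M(iy_0)$ with $M\notin\{\pm I,\pm S\}$, the integrand still has singularities at $t=y_0$ and $t=1/y_0$ (since $\tau\mapsto\mathcal{G}_w(z,\tau)$ is singular on the full orbit of $z$), but $r_z(it)=\bigl|\tfrac{it-z}{it-\bar z}\bigr|$ does not vanish there, so the factors $r_z(it)^{s_0}r_z(i/t)^{s_0}$ as written do not tame those poles. Your argument is complete only for $z\in i\R^+$; the paper's own proof does not address this case either, so this is as much a gap in the source as in your write-up, but it is worth being aware that the regularizing factors only cancel singularities at $\tau=z$ itself, not at $\SL_2(\Z)$-translates of $z$.
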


\begin{proof}
Using the modularity from Lemma \ref{lem:calHmodular}, one may write (for $\re(w)$ sufficiently large)
\[
L_z(w,s_0;s)=\mathcal{J}_{w,s,s_0,z}(t_0)- \mathcal{J}_{w,2-s,s_0,z}\left(\frac{1}{t_0}\right)
\]
with 
\[
\mathcal{J}_{w,s,s_0,z}(t_0):=\int_{t_0}^{\infty} \mathcal{G}_{w}(z,it)r_{z}(it)^{s_0}r_{z}\left(\frac{i}{t}\right)^{s_0} t^{s-1} dt.
\]
The claim then follows by Lemma \ref{lem:G1inftyprop} (2).
\end{proof}

We next assume that $z\not\in \mathcal{S}$. In this case, we set 
$$L_z(w;s):=L_z(w,0;s).$$
\begin{theorem}\label{thm:Iwsprop}
	For each $z\in\H\setminus\mathcal{S}$ and $s\in\C$, the function $w\mapsto L_{z}(w;s)$ has a meromorphic continuation to the whole complex plane.  Furthermore, the resulting function in $z$ is an eigenfunction under $\Delta_{0,z}$ with eigenvalue $w(1-w)$ and it is invariant under the action of $\SL_2(\Z)$. Moreover, $L_z(w;s)$ satisfies the functional equation
	\[
		L_z(w;s) = -L_z(w;2-s).
	\] 
\end{theorem}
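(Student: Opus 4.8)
The plan is to deduce all four assertions from the properties of the raised resolvent kernel $\mathcal{G}_w$ recorded in Lemmas \ref{lem:calHmodular} and \ref{lem:G1inftyprop}, working first in the range where $\re(w)$ is large, so that the defining integral converges absolutely and locally uniformly by Lemma \ref{lem:absconverge}, and then propagating each identity to the meromorphic continuation by the identity theorem. First I would record the symmetric representation already obtained in the proof of Lemma \ref{lem:absconverge}: splitting the integral at $t=1$, substituting $t\mapsto 1/t$ on the range $(0,1)$, and using the weight two modularity of $\tau\mapsto\mathcal{G}_w(z,\tau)$ under $S=\smallSmatrix$ (so that $\mathcal{G}_w(z,i/t)=-t^2\mathcal{G}_w(z,it)$) yields, for $\re(w)$ sufficiently large,
\[
L_z(w;s)=\mathcal{J}_{w,s,0,z}(1)-\mathcal{J}_{w,2-s,0,z}(1),\qquad \mathcal{J}_{w,s,0,z}(1)=\int_1^\infty \mathcal{G}_w(z,it)\,t^{s-1}\,dt.
\]
The functional equation $L_z(w;s)=-L_z(w;2-s)$ is then immediate, since replacing $s$ by $2-s$ merely interchanges the two terms; once both sides are known to be meromorphic in $w$, the identity persists for all $w$.

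For the meromorphic continuation I would fix $z\in\H\setminus\mathcal{S}$, so that $it$ is never $\SL_2(\Z)$-equivalent to $z$ and the integrand is smooth on $(0,\infty)$, and choose $T$ large enough that $t\ge T$ lies in the regime of Lemma \ref{lem:G1inftyprop}(2). Splitting $\mathcal{J}_{w,s,0,z}(1)=\int_1^T+\int_T^\infty$, the first piece is entire in $s$ and holomorphic in $w$ on the domain of $\mathcal{G}_w$. For the tail I would subtract the leading term $c_w(z)\,t^{-w}$, where $c_w(z):=\frac{2\pi i(w-1)}{1-2w}E_0(w;z)$, writing
\[
\int_T^\infty \mathcal{G}_w(z,it)\,t^{s-1}\,dt=\int_T^\infty\left(\mathcal{G}_w(z,it)-c_w(z)\,t^{-w}\right)t^{s-1}\,dt+c_w(z)\,\frac{T^{s-w}}{w-s}.
\]
The exponential decay of the error in Lemma \ref{lem:G1inftyprop}(2) makes the remaining integral entire in $s$ and holomorphic in $w$, while the explicit last term supplies the continuation in $w$ once one invokes the standard meromorphic continuation of $w\mapsto E_0(w;z)$.

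Finally, the $\SL_2(\Z)$-invariance in $z$ is inherited directly from the invariance of $z\mapsto\mathcal{G}_w(z,\tau)$ in Lemma \ref{lem:calHmodular}, and the eigenvalue equation $\Delta_{0,z}L_z(w;s)=w(1-w)L_z(w;s)$ follows by moving $\Delta_{0,z}$ inside the integral and using that $z\mapsto\mathcal{G}_w(z,\tau)$ has eigenvalue $w(1-w)$; both hold for $\re(w)$ large and then for all $w$ by continuation. I expect the main obstacle to be the analytic bookkeeping around the continuation: one must justify differentiating under the integral sign and check that $\Delta_{0,z}$ commutes both with the integral and with the continuation in $w$. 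This is precisely where the local uniformity in $z$ of the estimates in Lemma \ref{lem:G1inftyprop} and of the split representation above is essential.
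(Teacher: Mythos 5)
Your proposal is correct and follows essentially the same route as the paper: it uses the weight-two modularity of $\tau\mapsto\mathcal{G}_w(z,\tau)$ to fold the integral (exactly the representation appearing in the paper's proof of Lemma \ref{lem:absconverge}), which yields the functional equation, and obtains the continuation by subtracting the Eisenstein main term $\frac{2\pi i(w-1)}{1-2w}E_0(w;z)t^{-w}$ from the tail and integrating it explicitly, just as the paper does with $\mathcal{I}_{j,w,s}$ and $\mathcal{E}_{j,w,s}$ in \eqref{eqn:Iwsdef}. The remaining assertions (invariance and the $\Delta_{0,z}$-eigenvalue) are inherited from Lemma \ref{lem:calHmodular} in both arguments, so the only difference is cosmetic packaging of the split at $t_0$ versus folding onto $[1,\infty)$.
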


We prove Theorem \ref{thm:Iwsprop} through a series of lemmas and propositions. For $t_0>0$ we define
\begin{equation}\label{eqn:Iwsdef}
I_{w,s}(z):=\sum_{j=1}^2 \mathcal{I}_{j,w,s}(t_0;z)-\sum_{j=1}^{2}\mathcal{E}_{j,w,s}(t_0;z),
\end{equation}
where
\begin{align*}
\mathcal{I}_{1,w,s}(t_0;z)&:= \int_{0}^{t_0} \left(\mathcal{G}_{w}(z,it)+ \frac{ 2\pi i (w-1) }{1-2w} E_0(w;z)t^{w-2}\right) t^{s-1} dt,\\
\mathcal{I}_{2,w,s}(t_0;z)&:= \int_{t_0}^{\infty} \left(\mathcal{G}_{w}(z,it)-\frac{ 2\pi i (w-1) }{1-2w} E_0(w;z)t^{-w}\right) t^{s-1} dt,\\
\mathcal{E}_{1,w,s}(t_0;z)&:= \frac{2\pi i (w-1) t_0^{s+w-2}E_0(w;z)}{(1-2w)(s+w-2)},\\
\mathcal{E}_{2,w,s}(t_0;z)&:= \frac{2\pi i (w-1)t_0^{s-w}E_0(w;z)}{\left(1-2w\right)\left(s-w\right)}.
\end{align*}
We claim that $I_{w,s}(z)$ is independent of the choice of $t_0$ (see the remark after Lemma \ref{lem:ILagree}) and we show in Lemma \ref{lem:ILagree} below that it agrees with $L_z(w;s)$ for $\re(w)$ sufficiently large. Following this, we prove in Proposition \ref{prop:analyticcontinuation} that $I_{w,s}(z)$ gives a meromorphic continuation of $L_z(w;s)$ to the whole complex $w$-plane. We then show the functional equation of $I_{w,s}$ (and hence also of $L_z(w;s)$ by Lemma \ref{lem:ILagree}) in Proposition \ref{prop:Iwsfunctional}. 
The function $L_z(s)$ is then defined by the special value of this function at $w=1$ (see \eqref{eqn:Lz*def}). 
By using the absolute convergence of the integral in Lemma \ref{lem:absconverge} for $\re(w)$ sufficiently large and computing some elementary integrals, one easily sees that the functions $I_{w,s}(z)$ and $L_z(w;s)$ indeed coincide.

\begin{lemma}\label{lem:ILagree}
Let $s\in\C$ and $z\in\HH\setminus\mathcal{S}$. For $\re(w)$ sufficiently large, $I_{w,s}(z)=L_{z}(w;s)$. 
\end{lemma}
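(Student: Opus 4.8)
The plan is to start from the absolutely convergent integral for $L_z(w;s)$ valid when $\re(w)$ is large, and then to insert and remove the leading-order terms of $\mathcal{G}_w(z,it)$ at the two ends $t\to\infty$ and $t\to 0^+$, matching the result term-by-term against the definition \eqref{eqn:Iwsdef} of $I_{w,s}(z)$. First I would record that, since $s_0=0$ makes the factors $r_z(\tau)^{s_0}$ equal to $1$, Lemma \ref{lem:absconverge} gives, for $\re(w)$ sufficiently large (depending on $\re(s)$),
\[
L_z(w;s)=\int_0^\infty \mathcal{G}_w(z,it)\,t^{s-1}\,dt.
\]
Because $z\notin\mathcal{S}$, the $\SL_2(\Z)$-orbit of $z$ does not meet the imaginary axis, so $t\mapsto\mathcal{G}_w(z,it)$ is smooth on all of $(0,\infty)$ and the only issues are at the endpoints. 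I would then split the integral at $t_0$ and treat each piece separately.

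On $(t_0,\infty)$, where $\tau=it$ has $v=t\to\infty$, Lemma \ref{lem:G1inftyprop}~(2) shows that the leading term of $\mathcal{G}_w(z,it)$ is $\frac{2\pi i(w-1)}{1-2w}t^{-w}E_0(w;z)$. Subtracting this term turns the integrand into that of $\mathcal{I}_{2,w,s}$, while the removed piece contributes the elementary tail integral $\int_{t_0}^\infty t^{s-w-1}\,dt=-t_0^{s-w}/(s-w)$, convergent for $\re(w)>\re(s)$, which is exactly $-\mathcal{E}_{2,w,s}(t_0;z)$. Hence
\[
\int_{t_0}^\infty \mathcal{G}_w(z,it)\,t^{s-1}\,dt=\mathcal{I}_{2,w,s}(t_0;z)-\mathcal{E}_{2,w,s}(t_0;z).
\]

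On $(0,t_0)$ the behaviour as $t\to 0^+$ cannot be read off from Lemma \ref{lem:G1inftyprop}~(1) directly (there $v=t\to 0$), so I would transfer it to the cusp using the weight two modularity of $\tau\mapsto\mathcal{G}_w(z,\tau)$ from Lemma \ref{lem:calHmodular}. Applying $S$ gives $\mathcal{G}_w(z,it)=-t^{-2}\mathcal{G}_w(z,i/t)$, and now $\im(i/t)=1/t\to\infty$, so Lemma \ref{lem:G1inftyprop}~(2) applies to $\mathcal{G}_w(z,i/t)$ and yields leading term $-\frac{2\pi i(w-1)}{1-2w}t^{w-2}E_0(w;z)$ for $\mathcal{G}_w(z,it)$. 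Adding the correction $\frac{2\pi i(w-1)}{1-2w}E_0(w;z)t^{w-2}$ cancels this and produces the integrand of $\mathcal{I}_{1,w,s}$, while the removed piece gives $\int_0^{t_0}t^{w+s-3}\,dt=t_0^{w+s-2}/(w+s-2)$, convergent for $\re(w)>2-\re(s)$, which is precisely $-\mathcal{E}_{1,w,s}(t_0;z)$. Thus $\int_0^{t_0}\mathcal{G}_w(z,it)\,t^{s-1}\,dt=\mathcal{I}_{1,w,s}(t_0;z)-\mathcal{E}_{1,w,s}(t_0;z)$, and summing the two pieces gives $L_z(w;s)=I_{w,s}(z)$.

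I expect the main obstacle to be the $t\to 0^+$ analysis: one must route it through the modular transformation $S$ rather than apply the asymptotic of Lemma \ref{lem:G1inftyprop}~(1) directly, and one must track the signs carefully so that the subtracted power $t^{w-2}$ exactly matches the leading term (with the factor $-t^{-2}$ from the weight two transformation accounted for) and so that the two elementary power integrals reproduce precisely $\mathcal{E}_{1,w,s}$ and $\mathcal{E}_{2,w,s}$. Everything else is routine bookkeeping with convergent power integrals, valid exactly when $\re(w)$ is large enough that both $\re(w)>\re(s)$ and $\re(w)>2-\re(s)$ hold.
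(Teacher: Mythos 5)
Your proposal is correct and is exactly the argument the paper intends: the paper dispenses with this lemma in one sentence (``by the absolute convergence from Lemma \ref{lem:absconverge} and computing some elementary integrals''), and your write-up simply supplies those details --- splitting at $t_0$, routing the $t\to 0^+$ asymptotic through the weight two transformation under $S$ exactly as the paper itself does in the proof of Proposition \ref{prop:analyticcontinuation}, and matching the two power integrals $-t_0^{s-w}/(s-w)$ and $t_0^{s+w-2}/(s+w-2)$ with $-\mathcal{E}_{2,w,s}$ and $-\mathcal{E}_{1,w,s}$. The signs and convergence conditions ($\re(w)>\re(s)$ and $\re(w)>2-\re(s)$) all check out.
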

\begin{remark}
Note that Lemma \ref{lem:ILagree} implies that $I_{w,s}$ is independent of the choice of $t_0$ for $\re(w)$ sufficiently large because it agrees with $L_z(w;s)$. The Identity Theorem implies that it is independent of $t_0$ for all $w$ for which its analytic continuation exists. 
\end{remark}

The next step of the proof of Theorem \ref{thm:Iwsprop} is to prove that $I_{w,s}(z)$ gives a meromorphic continuation of $L_{z}(w;s)$ to the entire complex $w$-plane.

\begin{proposition}\label{prop:analyticcontinuation}
For each $s\in\C$, $z\in\HH\setminus\mathcal{S}$, and $t_0>0$, the functions $w\mapsto \mathcal{I}_{j,w,s}(t_0;z)$ $(j\in \{1,2\})$ converge absolutely and locally uniformly for $s,w\in\C$ outside of a discrete set of singularities at $w=\frac{1}{2}$ and at the poles of $(w-1)E_0(w;z)$. Moreover, if $w\notin\{\frac12,s,2-s\}$ and $w$ is not a pole of $(w-1)E_0(w;z)$, then the function $z\mapsto I_{w,s}(z)$ is an eigenfunction under $\Delta_0$ with eigenvalue $w(1-w)$ on $\HH\setminus\mathcal{S}$ and it is invariant under the action of $\SL_2(\Z)$.
\end{proposition}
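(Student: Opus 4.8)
The plan is to show that after subtracting the displayed leading terms the integrands of $\mathcal{I}_{1,w,s}$ and $\mathcal{I}_{2,w,s}$ decay \emph{exponentially} at the endpoints $t=0$ and $t=\infty$, which yields absolute and locally uniform convergence for \emph{all} $s$, and then to read off the eigenvalue equation and modular invariance by moving $\Delta_{0,z}$ and the $\SL_2(\Z)$-action inside the integral. First I would treat $\mathcal{I}_{2,w,s}$. Since $z\notin\mathcal{S}$, the ray $\tau=it$ $(t>0)$ never meets the $\SL_2(\Z)$-orbit of $z$, so by Lemma \ref{lem:calHmodular} the map $t\mapsto\mathcal{G}_w(z,it)$ is smooth on $(0,\infty)$ and only the endpoint behaviour is at issue. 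For $t$ large, Lemma \ref{lem:G1inftyprop} (2) gives $\mathcal{G}_w(z,it)=\frac{2\pi i(w-1)}{1-2w}t^{-w}E_0(w;z)+O(\cdots)$ with exponentially small error, and this main term is exactly what is subtracted in $\mathcal{I}_{2,w,s}$; hence the regularized integrand is exponentially small as $t\to\infty$ and the integral converges for every $s$.

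Next I would handle $\mathcal{I}_{1,w,s}$ near $t=0$ by using the weight-two modularity of $\tau\mapsto\mathcal{G}_w(z,\tau)$ from Lemma \ref{lem:calHmodular}: applying $S=\smallSmatrix$ gives $\mathcal{G}_w(z,it)=-t^{-2}\mathcal{G}_w(z,i/t)$, and since $i/t\to i\infty$ as $t\to0$, Lemma \ref{lem:G1inftyprop} (2) applied to the argument $i/t$ produces the main term $-\frac{2\pi i(w-1)}{1-2w}t^{w-2}E_0(w;z)$, which is precisely the quantity subtracted in $\mathcal{I}_{1,w,s}$, with remainder $O(t^{-2}e^{-\pi/(4t)})$. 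Thus this integrand too is exponentially small at its endpoint and convergence holds for every $s$. In both cases the subtracted coefficient $\frac{2\pi i(w-1)}{1-2w}E_0(w;z)$ and the implied constants are locally bounded in $(w,s)$ away from $w=\frac12$ and the poles of $(w-1)E_0(w;z)$; this gives absolute and locally uniform convergence there, and since for fixed $t$ the integrand is jointly holomorphic in $(w,s)$, holomorphy of $\mathcal{I}_{j,w,s}(t_0;z)$ off the stated discrete set follows by Morera's theorem applied in each variable.

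For the eigenvalue and invariance claims I would fix $w\notin\{\frac12,s,2-s\}$ and off the poles of $(w-1)E_0(w;z)$, so that the correction terms $\mathcal{E}_{j,w,s}$ are finite. Both $z\mapsto\mathcal{G}_w(z,it)$ and $z\mapsto E_0(w;z)$ are $\SL_2(\Z)$-invariant and satisfy $\Delta_{0,z}(\cdot)=w(1-w)(\cdot)$ (Lemma \ref{lem:calHmodular} and the standard eigenfunction property of the real-analytic Eisenstein series), so the regularized integrand of each $\mathcal{I}_{j,w,s}$ inherits both properties pointwise in $z$; as the $\mathcal{E}_{j,w,s}$ are $z$-independent multiples of $E_0(w;z)$, so does $I_{w,s}$. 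The interchange of $\Delta_{0,z}$ with the $t$-integral is justified by the locally uniform convergence applied also to the $z$-derivatives of the integrand, which obey the same exponential bounds, and invariance holds termwise because $\mathcal{S}$ is $\SL_2(\Z)$-invariant; this yields $\Delta_{0,z}I_{w,s}=w(1-w)I_{w,s}$ and $I_{w,s}(\gamma z)=I_{w,s}(z)$ on $\mathbb{H}\setminus\mathcal{S}$.

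The main obstacle I anticipate is justifying the exponentially decaying asymptotic uniformly for all relevant $w$, since Lemma \ref{lem:G1inftyprop} (2) is phrased locally around $w=1$; this reduces to showing, by meromorphic continuation in $w$, that the constant term of the $\tau$-Fourier expansion of $\mathcal{G}_w(z,\cdot)$ carries the stated main term while the nonconstant terms decay like $e^{-2\pi t}$ uniformly on compact $w$-sets, so that the listed singularities indeed come only from the explicit coefficient $\frac{2\pi i(w-1)}{1-2w}E_0(w;z)$ (note in particular that $\mathcal{G}_w$ is regular at $w=1$, the value relevant for $L_z(s)$, since the residue of the resolvent there is constant and hence annihilated by $R_{0,\tau}$). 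A secondary technical point is the differentiation under the integral sign for the eigenvalue equation, which again rests on these locally uniform bounds for the $z$-derivatives of the integrand.
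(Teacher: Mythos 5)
Your proposal is correct and follows essentially the same route as the paper: absolute convergence of $\mathcal{I}_{2,w,s}$ at $t=\infty$ directly from Lemma \ref{lem:G1inftyprop} (2), absolute convergence of $\mathcal{I}_{1,w,s}$ at $t=0$ by combining the weight two modularity of $\tau\mapsto\mathcal{G}_w(z,\tau)$ from Lemma \ref{lem:calHmodular} with the same asymptotic applied at $\frac{i}{t}$, and then the eigenvalue equation and $\SL_2(\Z)$-invariance by passing $\Delta_{0,z}$ and the group action through the absolutely and locally uniformly convergent integrals. The points you flag as obstacles (smoothness on the compact middle range since $z\notin\mathcal{S}$, and uniformity of the asymptotic in $w$ beyond a neighborhood of $w=1$) are in fact treated no more explicitly in the paper's own proof, so your added care there is a refinement rather than a divergence.
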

\begin{proof}
First assume that $w\neq \frac{1}{2}$ and $w$ is not a pole of $(w-1)E_{0}(w;z)$. By Lemma \ref{lem:G1inftyprop} (2), the part of the integral defining $\mathcal{I}_{2,w,s}(t_0;z)$
 with $t>y+\frac1y+\varepsilon$ converges
 absolutely. Similarly, using the weight two modularity of $\mathcal{G}_w$ 
from Lemma \ref{lem:calHmodular},
 Lemma \ref{lem:G1inftyprop} (2) implies that for $t<(y+\frac1y)^{-1}$ 
\[
\mathcal{G}_{w}(z,it)=\frac{1}{(it)^2}\mathcal{G}_w\left(z,\frac{i}{t}\right)= -\frac{2\pi i (w-1)}{1-2w}  E_0(w;z)t^{w-2}+O\left(t^{-2}y^{\frac{1}{2}}e^{c\left(y+\frac1y-\frac{1}{t}\right)}\right).
\]
Hence the integral in $\mathcal{I}_{1,w,s}(t_0;z)$ 
with  $t<(y+\frac1y+\varepsilon)^{-1}$ also
 converges absolutely. One sees directly that $w\mapsto \mathcal{E}_{j,w,s}(t_0;z)$ is meromorphic with possible poles for $w\in\{\frac{1}{2},s, 2-s\}$, and the poles of $(w-1)E_{0}(w;z)$. 

We next show that $I_{w,s}(z)$ is an eigenfunction under $\Delta_{0,z}$. Since the function $E_0(w;z)$ is an eigenfunction with eigenvalue  $w(1-w)$ under $\Delta_{0,z}$, we see that $z\mapsto \mathcal{E}_{j,w,s}(t_0;z)$ are eigenfunctions under $\Delta_{0,z}$ with eigenvalue $w(1-w)$.  
Moreover, since the integrals $I_{j,w,s}(t_0;z)$ are absolutely and locally uniformly convergent, we may take the operator $\Delta_{0,z}$ inside the integrals. Combining the fact that $E_{0}(w;z)$ is an eigenfunction under $\Delta_{0,z}$ with the fact that   $\mathcal{G}_w(z,it)$ is also an eigenfunction with the same eigenvalue by Lemma \ref{lem:calHmodular}, the resulting integrals are also eigenfunctions. Modular invariance in $z$ also follows directly from the modularity of $E_0(z,w)$ and $\mathcal{G}_{w}(z,it)$, 
using Lemma \ref{lem:calHmodular}.
\end{proof}

Using the modularity from Lemma \ref{lem:calHmodular}, implies the functional equation of $I_{w,s}(z)$ in the usual way.

\begin{proposition}\label{prop:Iwsfunctional}
Suppose that $z\in\HH\setminus\mathcal{S}$ and $w\in\C \setminus \{\frac 12\}$ is not a pole of $(w-1)E_0(w;z)$. Then for $s\notin\{w,2-w\}$ we have
\[
I_{w,2-s}(z)=-I_{w,s}(z).
\]
\end{proposition}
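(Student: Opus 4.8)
The plan is to run Riemann's argument: exploit the weight two modularity of $\tau\mapsto\mathcal{G}_w(z,\tau)$ under the involution $t\mapsto 1/t$ on the positive imaginary axis. Since $I_{w,s}(z)$ is independent of $t_0$ (by the remark following Lemma~\ref{lem:ILagree}), I would first specialize to $t_0=1$, so that $t\mapsto 1/t$ fixes the splitting point and interchanges the two integration ranges $(0,1)$ and $(1,\infty)$ of $\mathcal{I}_{1,w,s}$ and $\mathcal{I}_{2,w,s}$. The essential input, already recorded in the proof of Proposition~\ref{prop:analyticcontinuation}, is the identity $\mathcal{G}_w(z,i/t)=-t^2\mathcal{G}_w(z,it)$ coming from the weight two modularity in $\tau$ of Lemma~\ref{lem:calHmodular}.

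First I would substitute $t=1/u$ in $\mathcal{I}_{2,w,s}(1;z)$ and compare integrands. The Jacobian $dt=-du/u^2$, the factor $(1/u)^{s-1}=u^{1-s}$, and the modularity relation conspire so that the main term $\mathcal{G}_w(z,it)t^{s-1}$ is carried to $-\mathcal{G}_w(z,iu)u^{(2-s)-1}$. The key point is that the \emph{same} substitution sends the regularizing power $t^{-w}$ of $\mathcal{I}_2$ to $u^{w-2}$, which is exactly the regularizing power appearing in $\mathcal{I}_1$: indeed the exponent $s-1-w$ is taken by $t\mapsto 1/t$ (with the Jacobian) to $w-s-1$, matching $\mathcal{I}_{1,w,2-s}$. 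Thus the whole transformed integrand equals $-1$ times the integrand of $\mathcal{I}_{1,w,2-s}(1;z)$, giving $\mathcal{I}_{2,w,s}(1;z)=-\mathcal{I}_{1,w,2-s}(1;z)$ directly, with no need to evaluate the pieces separately. The identical computation applied to $\mathcal{I}_{1,w,s}(1;z)$ yields the companion $\mathcal{I}_{1,w,s}(1;z)=-\mathcal{I}_{2,w,2-s}(1;z)$, and hence $\sum_{j}\mathcal{I}_{j,w,s}(1;z)=-\sum_{j}\mathcal{I}_{j,w,2-s}(1;z)$.

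It then remains to check that the elementary correction terms are antisymmetric under $s\mapsto 2-s$. With $t_0=1$ they simplify to $\mathcal{E}_{1,w,s}(1;z)=\tfrac{2\pi i(w-1)E_0(w;z)}{(1-2w)(s+w-2)}$ and $\mathcal{E}_{2,w,s}(1;z)=\tfrac{2\pi i(w-1)E_0(w;z)}{(1-2w)(s-w)}$, which share a common numerator. Replacing $s$ by $2-s$ sends the denominator $s+w-2$ of $\mathcal{E}_1$ to $w-s=-(s-w)$, the negative of the denominator of $\mathcal{E}_2$, and sends $s-w$ to $2-s-w=-(s+w-2)$, the negative of the denominator of $\mathcal{E}_1$. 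Hence $\mathcal{E}_{1,w,2-s}(1;z)=-\mathcal{E}_{2,w,s}(1;z)$ and $\mathcal{E}_{2,w,2-s}(1;z)=-\mathcal{E}_{1,w,s}(1;z)$, so that $\sum_j\mathcal{E}_{j,w,2-s}(1;z)=-\sum_j\mathcal{E}_{j,w,s}(1;z)$. The hypothesis $s\notin\{w,2-w\}$ is exactly what keeps these four rational expressions finite. Combining this with the previous paragraph through the definition \eqref{eqn:Iwsdef} yields $I_{w,2-s}(z)=-I_{w,s}(z)$.

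I expect the only real obstacle to be the bookkeeping of signs and exponents in the regularization: one must verify that the added powers $t^{w-2}$ and $t^{-w}$ in $\mathcal{I}_1$ and $\mathcal{I}_2$ really are interchanged by $t\mapsto1/t$, so that the transformed integrands match \emph{exactly} rather than merely up to an integrable error, and that the resulting signs line up with the antisymmetry of the $\mathcal{E}_{j,w,s}$. The legitimacy of the substitution and of comparing integrands termwise is guaranteed by the absolute and locally uniform convergence established in Proposition~\ref{prop:analyticcontinuation}, so beyond this matching there is nothing analytically delicate; the functional equation is then an identity of meromorphic functions in $w$, valid wherever the continuation exists.
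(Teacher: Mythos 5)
Your proof is correct and is precisely the argument the paper intends: the authors give no details beyond invoking the weight two modularity of $\tau\mapsto\mathcal{G}_w(z,\tau)$ from Lemma \ref{lem:calHmodular} ``in the usual way,'' and your substitution $t\mapsto 1/t$ with $t_0=1$, the resulting identities $\mathcal{I}_{2,w,s}(1;z)=-\mathcal{I}_{1,w,2-s}(1;z)$ and $\mathcal{I}_{1,w,s}(1;z)=-\mathcal{I}_{2,w,2-s}(1;z)$, and the antisymmetry of the $\mathcal{E}_{j,w,s}$ are exactly that computation. The sign and exponent bookkeeping you flag all checks out.
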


 Since $I_{w,s}(z)$ provides the analytic continuation  of $L_z(w;s)$ 
by Lemma \ref{lem:ILagree} and Proposition \ref{prop:analyticcontinuation}, we set
\begin{equation}\label{eqn:Lz*def}
L_{z}(s):=I_{1,s}(z).
\end{equation}

\begin{remark}
	Due to the connection between $H_z^*$ and $\widehat{E}_2$ in \eqref{eqn:Hlimit}, one may naively consider $L(\widehat{E}_2,s)$  as the generalized $L$-function at $z=i\infty$. Although one cannot legally interchange the limit with the integrals defining $I_{1,s}(z)$ (and hence $L_z(s)$) to make this connection rigorous, the relationship between these (generalized) $L$-functions is investigated in Section \ref{sec:limit}. 
\end{remark}

\begin{proof}[Proof of Theorem \ref{thm:Iwsprop}]
Combining Lemma \ref{lem:ILagree} with Propositions \ref{prop:analyticcontinuation} and \ref{prop:Iwsfunctional} yields Theorem \ref{thm:Iwsprop}.
\end{proof}

\begin{proof}[Proof of Theorem \ref{thm:main} \textnormal{(1)}--\textnormal{(3)}]
 This follows from \eqref{eqn:Lz*def} and Theorem \ref{thm:Iwsprop}.
\end{proof}

\section{Behavior of the generalized $L$-functions towards infinity and the proof of Theorem \ref{thm:main} (4)}\label{sec:limit}

The goal of this section is to investigate the growth of $L_z(s)$ as $z\to i\infty$, ultimately proving Theorem \ref{thm:main} (4). In particular, in Theorem \ref{thm:LFun-RiemannZeta} we obtain an expansion of the type 
\begin{equation}\label{eqn:Lypowexp1}
L_z(s)=2 \pi i L\left(\widehat{E}_2,s\right) + \sum_{\ell=0}^{\left\lfloor\re(s)\right\rfloor} c_{\ell,s}(x) y^{s-\ell} +  \sum_{\ell=0}^{\left\lfloor 2-\re(s)\right\rfloor} d_{\ell,s}(x) y^{2-s-\ell} + o(1). 
\end{equation}
Due to the functional equations for $L_z(s)$ and $L(\widehat{E}_2,s)$, an expansion of the type \eqref{eqn:Lypowexp1}, if it exists, has a further restricted shape. 
\begin{lemma}\label{lem:expansionrestrict}
An expansion of the type \eqref{eqn:Lypowexp1} exists if and only if 
\begin{equation}\label{eqn:Lypowexp}
L_z(s)=2\pi i L\left(\widehat{E}_2,s\right) + \sum_{\ell=0}^{\left\lfloor\re(s)\right\rfloor} c_{\ell,s}(x) y^{s-\ell} -  \sum_{\ell=0}^{\left\lfloor 2-\re(s)\right\rfloor} c_{\ell,2-s}(x) y^{2-s-\ell} + o(1).
\end{equation}
Moreover, such an expansion holds if and only if it holds for $\re(s)\geq 1$.
\end{lemma}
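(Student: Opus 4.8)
The plan is to exploit the two functional equations—$L_z(2-s) = -L_z(s)$ from Theorem \ref{thm:main} (3) (equivalently Theorem \ref{thm:Iwsprop} at $w=1$) and $L(\widehat{E}_2, 2-s) = -L(\widehat{E}_2, s)$ from Lemma \ref{lem:LE2eval} (4)—to pin down the relationship between the coefficients $c_{\ell,s}(x)$ and $d_{\ell,s}(x)$ appearing in the two sums. The key observation is that the map $s \mapsto 2-s$ interchanges the two power series: it sends $y^{s-\ell}$ to $y^{(2-s)-\ell}$ and vice versa, and it sends $\lfloor \re(s) \rfloor$ to $\lfloor 2 - \re(s)\rfloor$. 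So if I start from an assumed expansion of the form \eqref{eqn:Lypowexp1}, apply $s \mapsto 2-s$ to get an expansion for $L_z(2-s)$, and then multiply by $-1$, I obtain a second expansion for $L_z(s)$ whose ``$y^{s-\ell}$'' sum has coefficients $-d_{\ell,2-s}(x)$ and whose ``$y^{2-s-\ell}$'' sum has coefficients $-c_{\ell,2-s}(x)$.

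The main step is then a uniqueness-of-asymptotic-expansion argument. First I would fix $s$ with, say, $\re(s)$ not an integer and $\re(s) \neq 1$, so that the exponents $\{s - \ell\}_\ell$ and $\{(2-s) - \ell\}_\ell$ are distinct and the dominant growth is unambiguous; the powers $y^{s-\ell}$ and $y^{2-s-\ell}$ are linearly independent as functions of $y \to \infty$ (they have distinct real parts, or distinct imaginary parts when real parts coincide). Comparing my two expansions for the same function $L_z(s)$ and using that the constant term $2\pi i\, L(\widehat{E}_2,s)$ is common to both—here I invoke the functional equation of $L(\widehat{E}_2, s)$ so that $2\pi i\, L(\widehat{E}_2, 2-s) \cdot (-1) = 2\pi i\, L(\widehat{E}_2, s)$ matches—I can equate coefficients of each power of $y$. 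This forces $c_{\ell,s}(x) = -d_{\ell, 2-s}(x)$ for every $\ell$, which upon relabeling is exactly the identity $d_{\ell, s}(x) = -c_{\ell, 2-s}(x)$ asserted in \eqref{eqn:Lypowexp}. Thus \eqref{eqn:Lypowexp1} with free coefficients is equivalent to \eqref{eqn:Lypowexp} with the constrained coefficients.

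For the final sentence of the lemma—that such an expansion holds if and only if it holds for $\re(s) \geq 1$—I would argue by symmetry using the functional equation directly. If the expansion \eqref{eqn:Lypowexp} is established for all $s$ with $\re(s) \geq 1$, then for $s$ with $\re(s) < 1$ one has $\re(2-s) > 1$, so \eqref{eqn:Lypowexp} holds at the point $2-s$; applying $L_z(s) = -L_z(2-s)$ and $L(\widehat{E}_2,s) = -L(\widehat{E}_2,2-s)$ and swapping the two sums (which again just relabels $c_{\ell, 2-s} \leftrightarrow c_{\ell, s}$ and the floors $\lfloor \re(2-s)\rfloor \leftrightarrow \lfloor 2 - \re(2-s)\rfloor = \lfloor \re(s)\rfloor$) recovers \eqref{eqn:Lypowexp} at $s$. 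The converse is trivial. This reduces verifying the expansion everywhere to the half-plane $\re(s) \geq 1$, which is the range handled in Theorem \ref{thm:LFun-RiemannZeta}.

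The step I expect to be the main obstacle is making the coefficient-comparison rigorous when $\re(s)$ is a half-integer or otherwise causes the two exponent families $\{s-\ell\}$ and $\{2-s-\ell\}$ to overlap or to have equal real parts, and at the boundary case $\re(s) = 1$ where the floors $\lfloor \re(s)\rfloor$ and $\lfloor 2-\re(s)\rfloor$ interact with the error term $o(1)$. In those degenerate cases the linear independence of the $y$-powers must be argued via their distinct imaginary exponents (nonreal powers oscillate at distinct frequencies in $\log y$), and one must check that no genuine collision between a $y^{s-\ell}$ term and a $y^{2-s-\ell'}$ term occurs within the ranges of summation; I would handle this by first proving the coefficient identity on the open set where $\re(s)$ is non-integral (where independence is automatic) and then extending to all admissible $s$ by the analyticity of the coefficient functions $c_{\ell,s}(x)$ in $s$.
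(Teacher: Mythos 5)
Your approach via the two functional equations is exactly the one the paper has in mind (the paper gives no written proof of this lemma beyond the preceding remark that it is ``due to the functional equations''), and your treatment of the final sentence --- transporting \eqref{eqn:Lypowexp} from $2-s$ back to $s$ and observing that the constrained shape is self-dual under $s\mapsto 2-s$ together with multiplication by $-1$ --- is correct as written. The one soft spot is the step you yourself single out as the main obstacle: equating coefficients of the two expansions for $L_z(s)$. The coefficients $c_{\ell,s}(x)$, $d_{\ell,s}(x)$ in a hypothetical expansion \eqref{eqn:Lypowexp1} are only given pointwise in $s$, so you cannot invoke their ``analyticity in $s$'' to handle the degenerate values where the exponent families $\{s-\ell\}$ and $\{2-s-\ell'\}$ collide (i.e.\ $2s\in 2+\Z$) or where the smallest power $y^{s-\lfloor\re(s)\rfloor}$ is merely bounded rather than growing; at such $s$ the coefficients are genuinely non-unique and no comparison is available. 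Fortunately the comparison is not needed: since the lemma only asserts the \emph{existence} of an expansion of the constrained shape, you may symmetrize instead of comparing. Averaging the assumed expansion \eqref{eqn:Lypowexp1} with the expansion obtained from it by $s\mapsto 2-s$ followed by multiplication by $-1$ (using both functional equations, and noting that $\lfloor\re(2-s)\rfloor=\lfloor 2-\re(s)\rfloor$ so the summation ranges match) produces coefficients $\tilde c_{\ell,s}=\frac12\left(c_{\ell,s}-d_{\ell,2-s}\right)$ and $\tilde d_{\ell,s}=\frac12\left(d_{\ell,s}-c_{\ell,2-s}\right)$, which satisfy $\tilde d_{\ell,s}=-\tilde c_{\ell,2-s}$ identically; this is \eqref{eqn:Lypowexp}, with no appeal to linear independence of the powers of $y$ and with all $s$ treated uniformly. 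With that replacement your argument is complete.
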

We next relate $L_z(s)$ to $L(\widehat{E}_2,s)$. 
For ease of notation, we define (with $0\leq y_1\leq y_2\leq \infty$) 
\begin{align*}
\mathbb{J}_{z,s,0}(y_1,y_2)&:=\int_{y_1}^{y_2}\left( H_z(it)+\frac{1}{t^2}\right)t^{s-1} dt,\\
\mathbb{J}_{z,s,i\infty}(y_1,y_2)&:=\int_{y_1}^{y_2}\left( H_z(it)-1\right)t^{s-1} dt,
\end{align*}
where the subscripts $0$ and $i\infty$ indicate that we subtract the main growth of $H_z(it)$, which is defined in \eqref{eqn:Hztaudef}, towards $0$ or $i\infty$, respectively. 
\begin{lemma}\label{lem:Lz*LE2}
Suppose that $z\in\HH\setminus\mathcal{S}$. Then for any $t_0>0$ we have 
\[
L_z(s)=2\pi i L\left(\widehat{E}_2,s\right) -2\pi i  \mathbb{J}_{z,s,0}(0,t_0)-2\pi i \mathbb{J}_{z,s,i\infty}(t_0,\infty)
+2\pi i \frac{t_0^{s}}{s} +2\pi i \frac{t_0^{s-2}}{s-2}.
\]
\end{lemma}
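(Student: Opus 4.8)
The plan is to evaluate the regularized integral $L_z(s)=I_{1,s}(z)$ from \eqref{eqn:Lz*def} and \eqref{eqn:Iwsdef} at $w=1$ and to compare it termwise with the defining formulas for $L(\widehat E_2,s)$, $\mathbb{J}_{z,s,0}$, and $\mathbb{J}_{z,s,i\infty}$. By Proposition \ref{prop:dGs} we have $\mathcal{G}_1(z,it)=-2\pi i H_z^*(it)$, while, since $E_0(w;z)$ has a simple pole at $w=1$ with residue $3/\pi$, the normalizing constant occurring in \eqref{eqn:Iwsdef} satisfies $\lim_{w\to1}\frac{2\pi i(w-1)}{1-2w}E_0(w;z)=-2\pi i\cdot\frac{3}{\pi}=-6i$. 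Inserting these into the definitions of $\mathcal{I}_{j,1,s}$ and $\mathcal{E}_{j,1,s}$ and pulling out the common factor $-2\pi i$, I would obtain
\[
L_z(s)=-2\pi i\left[\int_0^{t_0}\!\Big(H_z^*(it)+\frac{3}{\pi t}\Big)t^{s-1}dt+\int_{t_0}^{\infty}\!\Big(H_z^*(it)-\frac{3}{\pi t}\Big)t^{s-1}dt-\frac{6}{\pi}\frac{t_0^{s-1}}{s-1}\right].
\]

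The essential input is the decomposition $H_z^*(\tau)=H_z(\tau)-\widehat E_2(\tau)$, which bridges the polar harmonic Maass form $H_z^*$ appearing in $L_z$ and the meromorphic form $H_z$ appearing in the $\mathbb{J}$'s. I would prove it by rigidity: for $z$ not an elliptic fixed point, $\tau\mapsto H_z^*(\tau)$ and $\tau\mapsto H_z(\tau)$ are both weight two and $\SL_2(\Z)$-modular in $\tau$ with a single simple pole modulo $\SL_2(\Z)$, located at $\tau\equiv z$; a direct computation of the residue (using the $w=1$ specialization $g_1(z,\tau)=\log|X_{\tau}(z)|^2$ with $R_{0,\tau}=2i\partial_\tau$ on one side, and $J'=-2\pi i E_4^2E_6/\Delta$ on the other) shows both residues equal $\frac{i}{2\pi}$. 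Hence $H_z^*-H_z+\widehat E_2$ is a pole-free weight two harmonic Maass form on $\SL_2(\Z)$; by \eqref{eqn:GrowthJ} (so that $H_z(it)\to1$) together with Lemma \ref{lem:Hinftyprop} it decays at $i\infty$, and a weight two harmonic Maass form on $\SL_2(\Z)$ with no poles and vanishing at $i\infty$ is identically zero. The elliptic cases follow by continuity in $z$. (Alternatively, this decomposition may be quoted from \cite{BKLOR,BKweight0}.)

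Substituting $H_z^*=H_z-\widehat E_2$ and regrouping, I would write on $(0,t_0)$
\[
H_z^*(it)+\frac{3}{\pi t}=\Big(H_z(it)+\frac{1}{t^2}\Big)-\Big(\widehat E_2(it)+\frac{1}{t^2}-\frac{3}{\pi t}\Big),
\]
and on $(t_0,\infty)$
\[
H_z^*(it)-\frac{3}{\pi t}=\big(H_z(it)-1\big)-\Big(\widehat E_2(it)-1+\frac{3}{\pi t}\Big),
\]
so that the $H_z$-terms reproduce $\mathbb{J}_{z,s,0}(0,t_0)$ and $\mathbb{J}_{z,s,i\infty}(t_0,\infty)$, while the $\widehat E_2$-terms reproduce exactly the two integrals in \eqref{eqn:L*E2def}. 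Solving \eqref{eqn:L*E2def} for the sum of those two integrals and substituting, the two copies of $\frac{6}{\pi}\frac{t_0^{s-1}}{s-1}$ cancel, and the remaining terms collapse to the claimed expression $2\pi i L(\widehat E_2,s)-2\pi i\mathbb{J}_{z,s,0}(0,t_0)-2\pi i\mathbb{J}_{z,s,i\infty}(t_0,\infty)+2\pi i\frac{t_0^{s}}{s}+2\pi i\frac{t_0^{s-2}}{s-2}$.

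I expect the main obstacle to be less the algebra than the analysis underlying it: the decomposition $H_z^*=H_z-\widehat E_2$ (in particular the residue matching at $\tau=z$ and the treatment of elliptic $z$) is the one genuinely new ingredient, and all of the integral manipulations must be carried out where the pieces converge absolutely. I would therefore perform the regrouping first for $s$ in a strip (for instance $0<\re(s)<2$, $s\neq1$), where the exponential decay of $H_z(it)-1$ and of $\widehat E_2(it)-1+\frac{3}{\pi t}$ as $t\to\infty$, and the rapid decay of the regularized integrands as $t\to0$, guarantee absolute convergence, and then invoke the meromorphy of both sides together with the identity theorem to conclude for all $s$. Independence of $t_0$ is inherited from the corresponding statements already recorded for $I_{w,s}(z)$ and for $L(\widehat E_2,s)$.
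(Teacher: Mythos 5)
Your proposal is correct and follows essentially the same route as the paper: evaluate $I_{1,s}(z)$ at $w=1$ using Proposition \ref{prop:dGs} and $\lim_{w\to1}(w-1)E_0(w;z)=\frac{3}{\pi}$, insert the decomposition $H_z^*=H_z-\widehat{E}_2$, and regroup against \eqref{eqn:L*E2def}; your intermediate formula and final bookkeeping match the paper's exactly. The only difference is that the paper simply cites \cite{AsaiKanekoNinomiya} and the remark after Theorem 1.1 of \cite{BKLOR} for $H_z^*=H_z-\widehat{E}_2$, whereas you sketch a rigidity proof --- which is fine, except that the decay at $i\infty$ should be justified by the vanishing of the constant term of $\tau\mapsto H_z^*(\tau)$ at $i\infty$ (cf.\ Lemma \ref{lem:Hz*prop} and \cite{BKLOR}) rather than by Lemma \ref{lem:Hinftyprop}, which concerns the limit $z\to i\infty$, not $\tau\to i\infty$.
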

\begin{proof}
Recalling the definition \eqref{eqn:Lz*def} and the absolute and locally uniform convergence of $I_{w,s}(z)$ shown in Proposition \ref{prop:analyticcontinuation}, we may directly plug $w=1$ into \eqref{eqn:Iwsdef}. It is well-known that (see \cite[p. 239, before (2.14)]{GrossZagier}) 
\[
\lim_{w\to 1} (w-1)E_0(w;z)= \frac{3}{\pi}.
\]
Plugging this into the definition following \eqref{eqn:Iwsdef}, we see directly that 
\begin{equation*}
\mathcal{E}_{1,1,s}(t_0;z)=\mathcal{E}_{2,1,s}(t_0;z)=-\frac{6 i t_0^{s-1}}{s-1}.
\end{equation*}
Moreover, using Proposition \ref{prop:dGs}, we obtain
\begin{equation}\label{eqn:I1seval2}
\sum_{j=1}^2\mathcal{I}_{j,1,s}(t_0;z) = -2\pi i \int_{0}^{t_0} \left(H_{z}^*(it)+\frac{3}{\pi t}\right)t^{s-1} dt -2\pi i  \int_{t_0}^{\infty} \left(H_z^*(it)-\frac{ 3}{\pi t}\right) t^{s-1} dt.
\end{equation}
We then use an identity of Asai, Kaneko, and Ninomiya \cite[Theorem 3]{AsaiKanekoNinomiya} and the second remark following \cite[Theorem 1.1]{BKLOR} to rewrite 
\[
H_z^*(\tau)=H_z(\tau)-\widehat{E}_2(\tau) .
\]
Plugging this into \eqref{eqn:I1seval2} and recalling the definition \eqref{eqn:L*E2def} yields the claim.
\end{proof}

Setting
\[
C_{\ell,s}(x):=\begin{cases}
\frac{2\pi i}{s}&\text{if }\ell = 0,\\
4\pi i\frac{(1-s)_{\ell-1}}{(2\pi)^{\ell}} \re\left(\Li_{\ell}\left(e^{2\pi i x}\right)\right)&\text{if }2\leq \ell\leq \left\lfloor\re(s)\right\rfloor\text{ is even},\\
-4\pi    \frac{\vphantom{\frac{1}{1}}(1-s)_{\ell-1}}{(2\pi)^{\ell}} \im\left(\Li_{\ell}\left(e^{2\pi i x}\right)\right)&\text{if }1\leq \ell\leq \left\lfloor\re(s)\right\rfloor\text{ is odd},
\end{cases}
\]
the generalized $L$-function $L_z(s)=I_{1,s}(z)$ is related to the Riemann zeta function via the following theorem.
\begin{theorem}\label{thm:LFun-RiemannZeta}
An expansion of the type \eqref{eqn:Lypowexp} exists. More precisely, for fixed  $x\in\R\setminus\Z$ we have
\begin{align*}
\lim_{y\to\infty}\left(L_{x+iy}(s)-\sum_{\ell=0}^{\left\lfloor\re(s)\right\rfloor} C_{\ell,s}(x) y^{s-\ell} +  \sum_{\ell=0}^{\left\lfloor 2-\re(s)\right\rfloor} C_{\ell,2-s}(x) y^{2-s-\ell}\right) &= 2\pi i L\lp\widehat{E}_2,s\rp\\
&= -\frac{24i}{(2\pi)^{s-1}}\Gamma(s)  \zeta(s)\zeta(s-1).
\end{align*}
\end{theorem}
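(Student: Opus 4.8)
The plan is to reduce, using the two lemmas already in place, to the large‑$y$ asymptotics of a single explicit integral, and then to extract that asymptotic by splitting at $t=y$ and feeding in the incomplete‑gamma and polylogarithm expansions recorded in Section \ref{specfuncprop}. First I would invoke Lemma \ref{lem:expansionrestrict} to reduce to $\re(s)\ge 1$, and Lemma \ref{lem:Lz*LE2} to replace $L_z(s)$ by $2\pi i L(\widehat{E}_2,s)$ together with the two integrals $\mathbb{J}_{z,s,i\infty}(t_0,\infty)$ and $\mathbb{J}_{z,s,0}(0,t_0)$ and elementary $t_0$‑powers. Since $2\pi i L(\widehat{E}_2,s)$ is already the claimed limit and equals $-\tfrac{24i}{(2\pi)^{s-1}}\Gamma(s)\zeta(s)\zeta(s-1)$ by Lemma \ref{lem:LE2eval}, only the two integrals remain. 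Using the weight‑two modularity $H_z(it)=-t^{-2}H_z(i/t)$ and the substitution $u=1/t$, I would show $\mathbb{J}_{z,s,0}(0,t_0)=-\mathbb{J}_{z,2-s,i\infty}(1/t_0,\infty)$, so that both integrals are instances of one object $\mathbb{J}_{z,\sigma,i\infty}(\cdot,\infty)$ with $\sigma\in\{s,2-s\}$, and the whole problem reduces to one asymptotic computation.

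For the large‑$t$ integral, writing $g:=E_4^2E_6/\Delta$ I would split $H_z(it)-1=\frac{J(z)}{J(it)-J(z)}+\frac{g(it)-J(it)}{J(it)-J(z)}$. The hypothesis $z\notin\mathcal{S}$ with $x\notin\Z$ forces, via $|1-e^{-2\pi ix}|>0$ and \eqref{eqn:GrowthJ}, the uniform lower bound $|J(it)-J(z)|\gg_x e^{2\pi\max(t,y)}$ along the entire path (this is exactly where $x\notin\Z$ enters, and where $x\in\Z$ would fail, matching the exclusion of $\mathcal{S}$). With this bound a dominated‑convergence argument shows that the second summand integrates against $t^{\sigma-1}$ to $o(1)$ as $y\to\infty$, and that $J(it)$ may be replaced by $e^{2\pi t}$ and $J(z)$ by $e^{-2\pi iz}$ in the first summand at the cost of a further $o(1)$. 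Thus $\mathbb{J}_{z,\sigma,i\infty}(t_0,\infty)$ equals the model integral $\int_{t_0}^\infty \frac{e^{-2\pi iz}}{e^{2\pi t}-e^{-2\pi iz}}\,t^{\sigma-1}dt+o(1)$.

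To evaluate the model I would split at $t=y$. On $(y,\infty)$ one has $|e^{-2\pi iz}e^{-2\pi t}|<1$, so the geometric expansion gives $\sum_{n\ge1}e^{-2\pi inz}(2\pi n)^{-\sigma}\Gamma(\sigma,2\pi ny)$; on $(t_0,y)$ the reciprocal expansion gives $-\sum_{m\ge0}e^{2\pi imz}\int_{t_0}^y e^{2\pi mt}t^{\sigma-1}dt$, whose $m=0$ term produces the leading $-y^\sigma/\sigma+t_0^\sigma/\sigma$. Inserting \eqref{eqn:Gammaexp} into the first sum and the ${}_1F_1$‑asymptotic \eqref{eqn:1F1ss+1} (through \eqref{eqn:Gamma1F1}) into the second and collecting powers of $y$, each side contributes terms $y^{\sigma-1-\ell}$ weighted by $\sum_n e^{\mp2\pi inx}n^{-\ell-1}$, which by \eqref{eqn:Lieval} are $\Li_{\ell+1}(e^{\mp2\pi ix})$. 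Using $\Li_m(e^{-2\pi ix})=\overline{\Li_m(e^{2\pi ix})}$, the two contributions combine into the real part for even index and imaginary part for odd index, precisely as in $C_{\ell,\sigma}(x)$. After multiplying by $-2\pi i$ and restoring $\sigma\in\{s,2-s\}$ this reproduces $\sum_\ell C_{\ell,s}(x)y^{s-\ell}-\sum_\ell C_{\ell,2-s}(x)y^{2-s-\ell}$, the $t_0$‑powers cancelling against those of Lemma \ref{lem:Lz*LE2}, and all terms with $\re(s-\ell)\le 0$ (resp. $\re(2-s-\ell)\le 0$) being absorbed into $o(1)$, which pins down the ranges $\lfloor\re(s)\rfloor$ and $\lfloor 2-\re(s)\rfloor$ in \eqref{eqn:Lypowexp}.

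The main obstacle is rigor in this last step. At $t=y$ the two geometric series sit exactly on their circle of convergence, so the interchange of the integral, the sum over $n$, and the limit $y\to\infty$ is only conditionally valid: the leading coefficients are of size $n^{-1}$ and converge solely because of the oscillating factor $e^{\mp2\pi inx}$. This is exactly what the regularized identity \eqref{eqn:Lieval} is built to legitimize, and it is again the hypothesis $x\notin\Z$ (keeping us off the branch cut of $\Li$) that makes it work; I would therefore carry a regularizer (working first for $\re(\sigma)$ large, or inserting the $\varepsilon$‑damping of \eqref{eqn:Lieval}) through the interchange and remove it at the end. A secondary technical point is the uniform treatment of the transition region $t\approx y$, which once more rests on $|J(it)-J(z)|\gg_x e^{2\pi y}$. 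Granting these, assembling the pieces yields the displayed limit and, via Lemma \ref{lem:expansionrestrict}, the full statement.
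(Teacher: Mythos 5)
Your proposal is correct and follows essentially the same route as the paper: reduce to $\re(s)\ge 1$, apply Lemma \ref{lem:Lz*LE2} and Lemma \ref{lem:LE2eval}, use weight-two modularity to fold the $t\to 0$ piece into a $t\to\infty$ piece with $s\mapsto 2-s$, split at $t\approx y$, expand geometrically on each side, and feed in \eqref{eqn:Gammaexp}, \eqref{eqn:Gamma1F1}, \eqref{eqn:1F1ss+1}, and the $\varepsilon$-regularized identity \eqref{eqn:Lieval} before combining conjugate polylogarithms into the real/imaginary parts appearing in $C_{\ell,s}(x)$. The only cosmetic difference is that you first pass to a single model integral and split it at $t=y$, whereas the paper splits the original integrals at $1/y$, $t_0$, and $(1\pm\varepsilon)y$; you also correctly identify the conditional-convergence issue at the boundary of the geometric series as the point where $x\notin\Z$ and the regularization are essential.
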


\begin{proof} 
 By the Identity Theorem, it suffices to prove the claim for  $s\notin\Z$ and $\re(s)\geq 1$. Lemma \ref{lem:Lz*LE2} and  Lemma \ref{lem:LE2eval} (3) then imply that the claim of Theorem \ref{thm:LFun-RiemannZeta} is equivalent to 
\begin{multline}\label{eqn:finaltoshow}
\lim_{y\to\infty} \left(\mathbb{J}_{z,s,0}\left(0,t_0\right)+\mathbb{J}_{z,s,i\infty}\left(t_0,\infty\right)-\frac{i}{2\pi}\hspace{-.1cm} \sum_{\ell=0}^{\left\lfloor\re(s)\right\rfloor} \hspace{-.2cm} C_{\ell,s}(x) y^{s-\ell} + \frac{i}{2\pi} \hspace{-.1cm} \sum_{\ell=0}^{\left\lfloor 2-\re(s)\right\rfloor} \hspace{-.2cm} C_{\ell,2-s}(x) y^{2-s-\ell}\right)\\
= \frac{t_0^{s}}{s} + \frac{t_0^{s-2}}{s-2}.
\end{multline}
We assume without loss of generality  that $\frac{2}{y}<t_0<\frac{y}{2}$ and further split the integrals inside the limit. We claim that, as $y\to\infty$,
\begin{align}
\label{eqn:maintoshow1}
\mathbb{J}_{z,s,0}\left(0,\frac1y \right)&= -\frac{1}{2\pi} \Li_1\left(e^{-2\pi i x}\right)y^{1-s} +o_{x,s}(1),\\
\label{eqn:maintoshow2}
\mathbb{J}_{z,s,0}\left(\frac 1y,t_0\right)&=\frac{t_0^{s-2}-y^{2-s}}{s-2}+\frac{1}{2\pi} \Li_1\left(e^{2\pi i x}\right)y^{1-s}+o_{x,s}(1),\\
\label{eqn:maintoshow3}
\mathbb{J}_{z,s,i\infty}\left(t_0,y\right)&=\frac{t_0^s-y^s}{s} -  \sum_{\ell=1}^{\left\lfloor\re(s)\right\rfloor}  \frac{(1-s)_{\ell-1}}{(2\pi)^{\ell}}  \Li_{\ell}\left(e^{2\pi i x}\right)y^{s-\ell} +o_{x,s}(1),\\
\label{eqn:maintoshow4}
\mathbb{J}_{z,s,i\infty}\left(y,\infty\right)&=\sum_{\ell=1}^{\left\lfloor\re(s)\right\rfloor} (-1)^{\ell+1} \frac{(1-s)_{\ell-1}}{(2\pi)^{\ell}}\Li_{\ell}\left(e^{-2\pi i x}\right)y^{s-\ell}+o_{x,s}(1).
\end{align}
Before proving these, note that \eqref{eqn:maintoshow1}, \eqref{eqn:maintoshow2}, \eqref{eqn:maintoshow3}, and \eqref{eqn:maintoshow4} imply \eqref{eqn:finaltoshow} because
\[
-\Li_{\ell}\left(e^{2\pi i x}\right) + (-1)^{\ell+1}\Li_{\ell}\left(e^{-2\pi i x}\right) =\begin{cases} -2\re\left(\Li_{\ell}\left(e^{2\pi i x}\right)\right)&\text{if $\ell$ is even,}\\ -2i\im\left(\Li_{\ell}\left(e^{2\pi i x}\right)\right)&\text{if $\ell$ is odd.}\end{cases}
\]

We next show \eqref{eqn:maintoshow1}. Changing $t\mapsto \frac{1}{t}$ and using the weight two modularity of $H_z(\tau)$, we have
\begin{equation}\label{eqn:int1flip}
\mathbb{J}_{z,s,0}\left(0,\frac 1y \right)=-\mathbb{J}_{z,2-s,i\infty}(y,\infty)=-\lim_{\varepsilon\to 0^+} \mathbb{J}_{z,2-s,i\infty}((1+\varepsilon)y,\infty).
\end{equation}
As $t>(1+\varepsilon)y$, for $y$ sufficiently large the asymptotics in \eqref{eqn:GrowthJ} imply that $|J(it)|>|J(z)|$ and hence, for all $s\in\C$, 
\begin{align}
\nonumber \mathbb{J}_{z,2-s,i\infty}((1+\varepsilon)y,\infty)&= \int_{(1+\varepsilon)y}^{\infty}\left(\sum_{j=1}^{\infty}e^{-2\pi j (iz+t)}+O_x\left(e^{-2\pi t}\right) \left|\sum_{j=0}^{\infty}e^{-2\pi j (iz+t)}\right|\right)t^{1-s}dt\\
&\label{eqn:intq-power}= \int_{(1+\varepsilon)y}^{\infty}\sum_{j=1}^{\infty}e^{-2\pi j (iz+t)}t^{1-s}dt+O_{x,s}\left(e^{-\pi y}\right).
\end{align}
By the Dominated Convergence Theorem,  one can interchange the integral and sum for the main term.  Thus the main term of \eqref{eqn:intq-power} gives a contribution to \eqref{eqn:int1flip} of 
\begin{align}\label{eqn:int1incGamma}
-\lim_{\varepsilon\to0^+} \sum_{j=1}^\infty e^{-2\pi ijz} \int_{(1+\varepsilon)y}^\infty e^{-2\pi jt}t^{1-s} dt 
&= -\lim_{\varepsilon\to0^+} \sum_{j=1}^\infty e^{-2\pi ijz}(2\pi j)^{s-2}  \Gamma\left(2-s,2\pi j(1+\varepsilon)y\right)
\\ \nonumber
&=-y^{1-s}\lim_{\varepsilon\to 0^+}\sum_{j=1}^{\infty} \frac{e^{-2\pi j(ix+\varepsilon y)}}{2\pi j(1+\varepsilon)} \left(1+O\left(j^{-1}y^{-1}\right)\right),
\end{align}
taking \eqref{eqn:Gammaexp} with $s\mapsto 2-s$, $y\mapsto 2\pi(1+\varepsilon)jy$, and $N=1$.
It is not hard to see that 
\[
	-y^{1-s} \lim_{\varepsilon\to0^+} \sum_{j=1}^\infty \frac{e^{-2\pi j(ix+\varepsilon y)}}{2\pi j(1+\varepsilon)} O\left(j^{-1}y^{-1}\right) \ll y^{-1} \to 0
\]
as $y\to\infty$. We then obtain \eqref{eqn:maintoshow1} by \eqref{eqn:Lieval}.

We next prove \eqref{eqn:maintoshow2}. We write 
\begin{equation}\label{eqn:finalfirst1/ytot0}
\mathbb{J}_{z,s,0}\left(\frac1y,t_0\right) = \int_{\frac1y}^{t_0} H_z(it) t^{s-1}dt + \frac{t_0^{s-2}-y^{2-s}}{s-2}.
\end{equation}
In the integral we make the change of variables $t\mapsto \frac1t$ and use the modularity of the integrand to obtain that
\[
\int_{\frac1y}^{t_0} H_z(it)t^{s-1} dt = \lim_{\varepsilon\to0^+} \int_{\frac{1}{t_0}}^{(1-\varepsilon)y} H_z(it) t^{1-s}dt.
\]
Using \eqref{eqn:GrowthJ}, a straightforward calculation shows that for $\frac{1}{t_0}<t<(1-\varepsilon)y$ and $x\notin \Z$ we have 
\[
\frac{1}{J(z)-J(it)}=\frac{e^{2\pi i z}}{1-e^{2\pi (t+iz)}}\left(1+O_x\left(e^{-2\pi(t+y)}\right)\right).
\]
Combining this with the second equation from \eqref{eqn:GrowthJ}, we obtain that the integral on the right-hand side of \eqref{eqn:finalfirst1/ytot0} equals 
\begin{multline}\label{eqn:J1/yexpand}
	\lim_{\varepsilon\to 0^+}  \int_{\frac{1}{t_0}}^{(1-\varepsilon)y} \left(e^{2\pi t} +O(1)\right)\frac{e^{2\pi i z}}{1-e^{2\pi (t+iz)}} \left(1+ O_x\left(e^{-2\pi (t+y)}\right)\right) t^{1-s}dt\\
	=\lim_{\varepsilon\to 0^+}  \int_{\frac{1}{t_0}}^{(1-\varepsilon)y}\frac{e^{2\pi (t+i z)}}{1-e^{2\pi (t+iz)}} \left(1+ O_x\left(e^{-2\pi t}\right)\right) t^{1-s}dt.
\end{multline}
The $O$-term in \eqref{eqn:J1/yexpand} vanishes as $y\to\infty$ due to exponential decay of the integrand. To evaluate the main term, we expand $\frac{e^{2\pi (t+iz)}}{1-e^{2\pi(t+iz)}}$ as a geometric series to write the main term as 
\begin{equation}\label{eqn:int1geometric}
\lim_{\varepsilon\to 0^+}  \int_{\frac{1}{t_0}}^{(1-\varepsilon)y}\frac{e^{2\pi (t+i z)}}{1-e^{2\pi (t+iz)}} t^{1-s}dt=\lim_{\varepsilon\to 0^+} \sum_{\ell=1}^{\infty}  \int_{\frac{1}{t_0}}^{(1-\varepsilon)y}e^{2\pi \ell (t+i z)}t^{1-s}dt.
\end{equation}
Taking $t\mapsto -\frac{t}{2\pi\ell}$ and then plugging in \eqref{eqn:Gamma1F1}  yields that the main term in \eqref{eqn:J1/yexpand} is 
\begin{align}\nonumber
\lim_{\varepsilon\to0^+} \sum_{\ell=1}^\infty \int_{\frac{1}{t_0}}^{(1-\varepsilon)y}e^{2\pi\ell(t+iz)}t^{1-s}dt &= \lim_{\varepsilon\to0^+} \sum_{\ell=1}^\infty e^{2\pi i\ell z} e^{\pi i (s-2)} (2\pi\ell)^{s-2} \Gamma\left(2-s,-\frac{2\pi\ell}{t_0},-2\pi\ell(1-\varepsilon)y\right)\\
\nonumber
&= \frac{y^{2-s}}{2-s} \lim_{\varepsilon\to0^+} \sum_{\ell=1}^\infty e^{2\pi i\ell z} {_1F_1}\left(2-s;3-s;2\pi\ell(1-\varepsilon)y\right)\\
\label{eqn:int1middle}
&\hspace{2.5cm}- \frac{t_0^{s-2}}{2-s} \sum_{\ell=1}^\infty e^{2\pi i\ell z} {_1F_1}\left(2-s;3-s;\frac{2\pi\ell}{t_0}\right).
\end{align}
Plugging in \eqref{eqn:1F1ss+1} with $s\mapsto 2-s$ and  $N=0$, we obtain that for $|(\ell+1)Z|\to\infty$ (recall that we assume $s\notin\Z$ above)
\begin{equation}\label{eqn:1F1asymptotic}
{_1F_1}\left(2-s;3-s;2\pi\ell Z\right)= (2-s)e^{2\pi\ell Z} \left(\frac{1}{2\pi\ell Z}+O_s\left(\frac{1}{\ell^2Z^2}\right)\right).
\end{equation}
Taking $Z=t_0^{-1}$, the second term in \eqref{eqn:int1middle} vanishes as $y\to\infty$. 

We now take $Z=(1-\varepsilon)y$ in \eqref{eqn:1F1asymptotic} and plug into the first term in \eqref{eqn:int1middle} to obtain that \eqref{eqn:int1middle} equals
\begin{equation}\label{eqn:int1middlefirst}
y^{2-s}\lim_{\varepsilon\to 0^+}\sum_{\ell=1}^{\infty}  e^{2\pi\ell \left(iz + (1-\varepsilon)y\right)}\left(\frac{1}{2\pi(1-\varepsilon)\ell y}+O_{s}\left(\frac{1}{\ell^2y^2}\right)\right). 
\end{equation}
The error term in \eqref{eqn:int1middlefirst} is absolutely convergent uniformly in $\varepsilon\geq 0$ and gives a contribution $\ll y^{-\re(s)}$, which vanishes because $\re(s)\geq 1$ by assumption. Plugging \eqref{eqn:Lieval} with $x\mapsto -x$ into the main term of \eqref{eqn:int1middlefirst} implies \eqref{eqn:maintoshow2}.

We next show \eqref{eqn:maintoshow4}. 
Noting that the left-hand side of \eqref{eqn:maintoshow4} is the negative of the second expression of \eqref{eqn:int1flip} with $s\mapsto 2-s$, we may plug \eqref{eqn:int1incGamma} with $s\mapsto 2-s$ into \eqref{eqn:intq-power} and then use \eqref{eqn:Gammaexp} to compute 
\begin{multline*}
\mathbb{J}_{z,s,i\infty}(y,\infty)= \sum_{\ell=0}^{\left\lfloor \re(s)\right\rfloor-1}(s-\ell)_{\ell}  y^{s-\ell-1}\\
\times \lim_{\varepsilon\to 0^+}\sum_{j=1}^{\infty} e^{-2\pi j (ix+\varepsilon y)}\left(\left( 2\pi j(1+\varepsilon)\right)^{-\ell-1}+O\left(j^{-1-\left\lfloor\re(s)\right\rfloor}y^{\ell-\left\lfloor\re(s)\right\rfloor}\right)\right)+o_{x,s}(1).
\end{multline*}
The error term converges absolutely and uniformly in $\varepsilon\geq 0$ and decays as $y\to\infty$. Hence
\begin{equation}\label{eqn:int2bigt}
\mathbb{J}_{z,s,i\infty}(y,\infty)
=\lim_{\varepsilon\to 0^+} \sum_{\ell=0}^{\left\lfloor \re(s)\right\rfloor-1}(s-\ell)_{\ell} y^{s-\ell-1}\sum_{j=1}^{\infty} \frac{e^{-2\pi j(ix+\varepsilon y)}}{\left( 2\pi j(1+\varepsilon)\right)^{\ell+1}}+o_{x,s}(1).
\end{equation}
Plugging \eqref{eqn:Lieval} into the main term in \eqref{eqn:int2bigt} yields that the main term in 
\eqref{eqn:int2bigt} equals 
\begin{equation*}
\sum_{\ell=0}^{\left\lfloor \re(s)\right\rfloor-1}(s-\ell)_{\ell}  \frac{1}{(2\pi)^{\ell+1}} \Li_{\ell+1}\left(e^{-2\pi i x}\right) y^{s-\ell-1}
=\sum_{\ell=1}^{\left\lfloor \re(s)\right\rfloor}  \frac{(s+1-\ell)_{\ell-1}}{(2\pi)^{\ell}} \Li_{\ell}\left(e^{-2\pi i x}\right) y^{s-\ell}.
\end{equation*}
To obtain \eqref{eqn:maintoshow4}, we evaluate $(s+1-\ell)_{\ell-1} = (-1)^{\ell+1}(1-s)_{\ell-1}$.

It remains to prove \eqref{eqn:maintoshow3}. By taking $t\mapsto \frac1t$ we have 
\[
\mathbb{J}_{z,s,i\infty}\left(t_0,y\right)=-\mathbb{J}_{z,2-s,0}\left(\frac{1}{y},\frac{1}{t_0}\right).
\]
Plugging \eqref{eqn:J1/yexpand} with $s\mapsto 2-s$ and $t_0\mapsto \frac{1}{t_0}$ into \eqref{eqn:finalfirst1/ytot0} 
 hence yields 
\begin{equation}\label{eqn:Jt0yexpand}
\mathbb{J}_{z,s,i\infty}\left(t_0,y\right)=  - \lim_{\varepsilon\to 0^+}  \int_{t_0}^{(1-\varepsilon)y}\frac{e^{2\pi (t+i z)}}{1-e^{2\pi (t+iz)}} \left(1+ O_x\left(e^{-2\pi t}\right)\right) t^{s-1}dt+\frac{t_0^{s}-y^s}{s}.
\end{equation}
As in \eqref{eqn:J1/yexpand}, the $O$-term in \eqref{eqn:Jt0yexpand} again vanishes as $y\to\infty$. 
Expanding the main term as done in \eqref{eqn:int1geometric}, we
 then plug in \eqref{eqn:int1middle} with $t_0\mapsto \frac{1}{t_0}$ and $s\mapsto2-s$ to obtain 
\begin{multline}\label{eqn:int2middle}
\lim_{\varepsilon\to 0^+}\int_{t_0}^{(1-\varepsilon)y} \frac{e^{2\pi(t+iz)}}{1-e^{2\pi(t+iz)}} t^{s-1}dt=\lim_{\varepsilon\to 0^+}\sum_{\ell=1}^{\infty}  \int_{t_0}^{(1-\varepsilon)y}e^{2\pi \ell (t+iz)} t^{s-1}dt\\
= \frac{y^s}{s} \lim_{\varepsilon\to0^+} \sum_{\ell=1}^\infty e^{2\pi i\ell z} {_1F_1}\left(s;s+1;2\pi\ell(1-\varepsilon)y\right)-\frac{t_0^s}{s} \sum_{\ell=1}^\infty e^{2\pi i\ell z} {_1F_1}\left(s;s+1;2\pi\ell t_0\right).
\end{multline}
We then take $s\mapsto 2-s$ and $Z=t_0$ in \eqref{eqn:1F1asymptotic} to see that the second term in \eqref{eqn:int2middle} vanishes as $y\to \infty$.

Plugging $Z=2\pi\ell (1-\varepsilon)y$ and $N=\left\lfloor\re(s)\right\rfloor$ into \eqref{eqn:1F1ss+1}, the first term in \eqref{eqn:int2middle} becomes
\begin{equation}\label{eqn:int2middleasymptotic}
y^{s}\lim_{\varepsilon\to 0^+}\sum_{\ell=1}^{\infty} e^{2\pi i\ell z} e^{2\pi \ell(1-\varepsilon)y} \left(\sum_{j=0}^{\left\lfloor\re(s)\right\rfloor-1} \frac{(1-s)_j}{  \left(2\pi\ell(1-\varepsilon)y\right)^{j+1}}+O_{s}\left((\ell(1-\varepsilon) y)^{-\left\lfloor\re(s)\right\rfloor-1}\right)\right).
\end{equation}
Since $|e^{2\pi i\ell z} e^{2\pi \ell (1-\varepsilon)y}|=e^{-2\pi \ell \varepsilon y}\leq 1$, the contribution from the error term may be bounded against a constant times
\[
y^{\re(s)-\left\lfloor\re(s)\right\rfloor-1} \sum_{\ell=1}^{\infty} \ell^{-\lfloor\re(s)\rfloor-1}.
\]
Since $-\left\lfloor\re(s)\right\rfloor-1\leq -2$ (using the assumption $\re(s)\geq 1$), the sum on $\ell$ converges absolutely and $\re(s)-\left\lfloor\re(s)\right\rfloor-1<0$ implies that the error term in \eqref{eqn:int2middleasymptotic} vanishes as $y\to\infty$. 

We then interchange the sum in $\ell$ and $j$ in the main term of \eqref{eqn:int2middleasymptotic} (noting the exponential decay in the sum on $\ell$) to rewrite \eqref{eqn:int2middleasymptotic} as
\[
\sum_{j=0}^{\left\lfloor\re(s)\right\rfloor-1} (1-s)_j y^{s-j-1} \lim_{\varepsilon\to 0^+}\sum_{\ell=1}^{\infty}\frac{e^{2\pi \ell(ix-\varepsilon y)}}{\left(2\pi\ell(1-\varepsilon)\right)^{j+1}}+o(1).
\]
Plugging in \eqref{eqn:Lieval} with $x\mapsto-x$ yields that the above equals
\[
 \sum_{j=0}^{ \left\lfloor\re(s)\right\rfloor-1} \frac{(1-s)_j}{(2\pi)^{j+1}} \Li_{j+1}\left(e^{2\pi i x}\right)y^{s-1-j}+o(1)= \sum_{j=1}^{\left\lfloor\re(s)\right\rfloor} \frac{(1-s)_{j-1}}{(2\pi)^{j}} \Li_{j}\left(e^{2\pi i x}\right)y^{s-j}+o(1),
\]
giving \eqref{eqn:maintoshow3} and completing the proof.
\end{proof}

As a corollary, we obtain Theorem \ref{thm:main} (4).

\begin{proof}[Proof of Theorem \ref{thm:main} \rm{(4)}]
For $1<\re(s)<2$, the only terms that occur in Theorem \ref{thm:LFun-RiemannZeta} are the terms $\ell=0$ and $\ell=1$ in the first sum and the $\ell=0$ term in the second sum. Hence in this case Theorem \ref{thm:LFun-RiemannZeta} states that  
\[
\lim_{y\to\infty}\left(L_{x+iy}(s) - \frac{2\pi i}{s} y^{s} - \frac{2\pi i }{s-2} y^{2-s}+2\im\left(\Li_1\left(e^{2\pi i x}\right)\right)y^{s-1}\right) =-\frac{24i}{(2\pi)^{s-1}}\Gamma(s)  \zeta(s)\zeta(s-1).
\]
The proof follows noting that
\[
\Li_1(z)=-\Log(1-z), \quad \im\left(\Log(1-z)\right) = \Arg(1-z). \qedhere
\]
\end{proof}


\begin{thebibliography}{99}
\bibitem{AsaiKanekoNinomiya}T. Asai, M. Kaneko, and H. Ninomiya, \begin{it}Zeros of certain modular functions and an application\end{it}, Comment. Math. Univ. St. Pauli \textbf{46} (1997), 93--101.
\bibitem{BookerKrishnamurthy}A. Booker and M. Krishnamurthy, \begin{it}Weil's converse theorem with poles\end{it} Int. Math. Res. Not. \textbf{2014} (2014), 5328--5339. 
\bibitem{Book} K. Bringmann, A. Folsom, K. Ono, and L. Rolen, \begin{it}Harmonic Maass forms and mock modular forms: theory and applications\end{it}, AMS Colloquium Series (2017).
\bibitem{BKweight0}K. Bringmann and B. Kane, \begin{it}A problem of Petersson about weight 0 meromorphic modular forms\end{it}, Res. Math. Sci \textbf{3:24} (2016), 1--31.
\bibitem{BKLOR} K. Bringmann, B. Kane, S. L\"obrich, K. Ono, and L. Rolen, \begin{it}On divisors of modular forms\end{it}, Adv. Math. \textbf{329} (2018), 541--554.
\bibitem{NIST} Digital Library of Mathematical Functions, National Institute of Standards and Technology, website:  http://dlmf.nist.gov/.
\bibitem{GrossZagierSingular}B. Gross and D. Zagier, \begin{it}On singular moduli\end{it}, J. reine angew. Math. \textbf{355} (1985), 191--220.
\bibitem{GrossZagier}B. Gross and D. Zagier, \begin{it}Heegner points and derivatives of $L$-series\end{it}, Invent. Math. \textbf{84} (1986), 225--320.
\bibitem{Hamburger} H. Hamburger, \begin{it}\"Uber die Riemannsche Funktionalgleichung der $\zeta$-function\end{it}, Math. Z. \textbf{10} (1921), 240--254. 
\bibitem{HardyWright}G. Hardy and E. Wright, \begin{it}An introduction to the theory of numbers\end{it}, Oxford University Press, 1938.
\bibitem{HeckeConverse}E. Hecke, \begin{it}\"Uber die Bestimmung Dirichletscher Reihen durch ihrer Funktionalgleichung\end{it}, Math. Ann. \textbf{112} (1936), 664--699.
\bibitem{Hejhal}D. Hejhal, \begin{it}The Selberg trace formula for $\operatorname{PSL}_2(\R)$, Volume 2\end{it}, Lect. Notes Math. \textbf{1001}, Springer, Berlin, 1983.

\bibitem{Kubota}T. Kubota, \begin{it}An elementary theory of Eisenstein series\end{it}, Halsted Press, New York, 1973.
\bibitem{Langlands}R. Langlands, \begin{it}Problems in the theory of automorphic forms\end{it}, Lecture Notes in Math. \textbf{170}, Springer, Berlin, 18--61.
\bibitem{Riemann}B. Riemann, \begin{it}\"Uber die Anzahl der Primzahlen unter einer gegebenen Gr\"osse\end{it}, Monats. Berliner Akad. \textbf{1859}  
\bibitem{WeilConverse}A. Weil, \begin{it}\"Uber die Bestimmung Dirichletscher Reihen durch Funktionalgleichungen\end{it}, Math. Ann. \textbf{168} (1967), 149--156. 
\bibitem{ZagierMellin}D. Zagier, \begin{it}Periods of modular forms and Jacobi theta functions\end{it}, Invent. Math. \textbf{104} (1991), 449--465.
\end{thebibliography}
\end{document}